\documentclass[11pt,a4paper]{article}

\usepackage{amsmath,amsthm,amssymb,a4wide}
\usepackage{times}
\usepackage{enumerate}

\usepackage{setspace}
%\doublespacing
%\singlespacing
%\onehalfspacing

%\usepackage[latin1]{inputenc}
%\usepackage[T1]{fontenc}
%\usepackage[centertags,leqno]{amsmath}

\usepackage{dsfont}

\newcommand{\R}{\mathbb R}

\newcommand{\gep}{\varepsilon}

\theoremstyle{definition}
\newtheorem{thm}{Theorem}[section]
\newtheorem{cor}[thm]{Corollary}
\newtheorem{prop}[thm]{Proposition}

\newtheorem{defin}[thm]{Definition}
\newtheorem{rem}[thm]{Remark}

\newtheorem{exa}[thm]{Example}

\newcommand{\subjclass}[1]{\bigskip\noindent\emph{2010 Mathematics Subject Classification:}\enspace#1}

\numberwithin{equation}{section}

\begin{document}

\title{The Aronsson equation, Lyapunov functions and local Lipschitz regularity of the minimum time function}
\author{Pierpaolo Soravia\thanks{email: soravia@math.unipd.it.  }\\
Dipartimento di Matematica\\ Universit\`{a} di Padova, via Trieste 63, 35121 Padova, Italy}

\date{}
\maketitle

\begin{abstract}
We define and study $C^1-$solutions of the Aronsson equation (AE), a second order quasi linear equation. We show that such super/subsolutions make the Hamiltonian monotone on the trajectories of the closed loop Hamiltonian dynamics. We give a short, general proof that $C^1-$solutions are absolutely minimizing functions.
We discuss how $C^1-$supersolutions of (AE) become special Lyapunov functions of symmetric control systems, and allow to find continuous feedbacks driving the system to a target in finite time, except on a singular manifold.
A consequence is a simple proof that the corresponding minimum time function is locally Lipschitz continuous away from the singular manifold, despite classical results show that it should only be  H\"older continuous unless appropriate conditions hold.
We provide two examples for H\"ormander and Grushin families of vector fields where we construct $C^1-$solutions (even classical) explicitly.
\end{abstract}

\subjclass{Primary 49L20; Secondary 35F21, 35D40, 93B05.}

\section{Introduction}

In this note we want to describe a possible new, non standard way of using the Aronsson equation, a second order partial differential equation, to obtain controllability properties of deterministic control systems. We investigate a symmetric control system
\begin{equation}\label{eqsystem}
\left\{
\begin{array}{ll}
\dot x_t=f(x_t,a_t),\\
x_0=x_o\in\Omega,
\end{array}\right.
\end{equation}
where $-f(a,A)\subset f(x,A)$, $A$ is a nonempty and compact subset of a metric space. We define the Hamiltonian
$$H(x,p)=\max_{a\in A}\{-f(x,a)\cdot p\},$$
which is therefore nonnegative and positively one homogeneous in the adjoint variable, and we want to drive the system to a target, temporarily we say the origin.
We are interested in the relationship of (\ref{eqsystem}) with the Aronsson equation (AE)
$$-\nabla\left(H(x,\nabla U(x))\right)\cdot H_p(x,\nabla U(x))=0,$$
which is a quasilinear degenerate elliptic equation.
Ideally, if everything is smooth, when we are given a classical solution $U$ of (AE) and we consider a trajectory $x_t$ of the Hamiltonian dynamics
$$\dot x_t=-H_p(x_t,\nabla U(x_t)),$$
which is a closed loop dynamics for the original control system, we find out that (AE) can be rewritten as
$$\frac d{dt}H(x_t,\nabla U(x_t))=0.$$
Therefore $H(x_t,\nabla U(x_t))$ is constant. This is a very desirable propery on the control system since it allows to use $U$ as a control Lyapunov function, despite the presence of a possibly nonempty singular set
\begin{equation}\label{eqsing}{\mathcal H}=\{x:H(x,\nabla U(x))=0\},
\end{equation}
which possibly contains the origin.
Indeed if $x_o$ is outside the singular set and $U$ has a unique global minimum at the origin, then the trajectory of the Hamiltonian dynamics will reach the origin in finite time.

In general however, several steps of this path break down. From one side, (AE) does not have $C^2$ classical solutions in general. Even in the case where $f=a$, $A=B_1(0)\subset\R^n$ is the closed unit ball, $H(p)=|p|$ and (AE) becomes the well known infinity Laplace equation
$$-\Delta U(x)\;\nabla U(x)\cdot \nabla U(x)=0,$$
solutions are not classical, although known regularity results show that they are $C^{1,\alpha}$. Therefore solutions of (AE) have to be meant in some weak sense, as viscosity solutions. For generic viscosity solutions, we can find counterexamples to the fact that the Hamiltonian is constant along trajectories of the Hamiltonian dynamics, as we show later. For an introduction to the theory of viscosity solutions in optimal control, we refer the reader to the book by Bardi, Capuzzo-Dolcetta \cite{bcd}.

In this paper we will first characterize when, for a given super or subsolution of (AE) the Hamiltonian is monotone on the trajectories of the Hamiltonian dynamics (e.g. satisfies the {\it monotonicity property}). To this end we introduce the notion of $C^1-$super/subsolution and prove for them that they satisfy the monitonicity property of the Hamiltonian. We emphasize the fact that not all viscosity solution that are $C^1$ functions, are $C^1-$ solutions according to our definition. Moreover, as a side result, we also show that our $C^1-$solutions are absolutely minimizing functions, i.e. local minimizers of the functional that computes the $L^\infty$ norm of the Hamiltonian. It is a well know equivalent property to being a viscosity solution of (AE) at least when $H$ is coercive or possibly in some Carnot Caratheodory spaces, but this fact is not completely understood in general. Therefore $C^1-$solution appears to be an appropriate notion.

We then prove that if (AE) admits a $C^1-$supersolution $U$ having a unique minimum at the origin, then our control system can be driven to the origin in finite time with a continuous feedback, starting at every initial point outside the singular set $\mathcal H$. If moreover $U$ satisfies appropriate decay in a neighborhood of the origin only at points where the Hamiltonian $H$ stays away from zero, then we show that the corresponding minimun time function is locally Lipschitz continuous outside the singular set, despite the fact that even if the origin is small time locally attainable, then the minimum time function can only be proved to be H\"older continuous in its domain, in general, under appropriate conditions. Thus the loss of regularity of the minimum time function is only concentrated at points in the singular set. Finally for two explicit well known examples, where the system has an H\"ormander type, or a Grushin family of vector fields, we exibit two explicit not yet known classical solutions of (AE), their gauge functions, providing examples of smooth absolute minimizers for such systems and the proof that their minimum time function is locally Lipschitz continuous outside the singular set.
We remark the fact that neither in the general statement nor in the examples, the family of vector fields is ever supposed to span the whole space at the origin, therefore the classical sufficient attainability condition ensuring that the minimum time function is locally Lipschitz continuous will not be satisfied in general. Indeed in the explicit examples that we illustrate in Section 4, the minimum time function is known to be locally only $1/2-$H\"older continuous in its domain.

Small time local attainability and regularity of the minimum time function is an important subject in optimal control.
Classical results by Petrov \cite{pe} show sufficient conditions for attainability at a single point by requiring that the convex hull of the vector fields at the point contains the origin in its interior. Such result was later improved by Liverovskii \cite{li} augmenting the vector fields with the family of their Lie brackets, see also the paper by author \cite{so7}.
More recently such results had several extensions in the work by Krastanov and Quincampoix \cite{kr3} and Marigonda, Rigo and Le \cite{ma,ma3, ma4}.
Our regularity results rather go in the direction of those contained in two recent papers by Albano, Cannarsa and Scarinci \cite{alcasc, alcasc1}, where they show, by completely different methods, that if a family of smooth vector fields satisfies the H\"ormander condition, then the set where the local Lipschitz continuity of the minimum time function fails is the union of singular trajectories, and that it is analytic except on a subset of null measure. Our approach is instead more direct and comes as a consequence of constructing Lyapunov functions as $C^1-$supersolutions of the Aronsson equation. We finally mention the paper by Motta and Rampazzo \cite{mr} where the authors study higher order hamiltonians obtained by adding iterated Lie brackets as additional vector fields, in order to prove global asymptotic controllability to a target. While we do not study asymptotic controllability in this paper, their idea of constructing a higher order Hamiltonian may be seen complementary to ours, using instead the equation (AE).

Equation (AE) was introduced by Aronsson \cite{ar0}, as the Euler Lagrange equation for absolute minimizers, i.e. local minima of $L^\infty$ functionals, typically the $L^\infty$ norm of the gradient. There has been a lot of work in more recent years to develop that theory using viscosity solutions by authors like Jensen \cite{jen1}, Barron-Jensen-Wang \cite{bjw}, Juutinen \cite{ju}, Crandall \cite{cr}. For the main results on the infinity Laplace equation, we refer the reader to the paper \cite{acj} and the references therein.
For results for equation (AE) especially in the $x$ dependent case, we also refer to the paper by the author \cite{soae} and the references therein, see also \cite{so2,so6}.
In particular we mention that equation (AE) has been studied in Carnot groups by Bieske-Capogna \cite{bica}, by Bieske \cite{bie} in the Grushin space, and by Wang \cite{wa} in the case of $C^2$ and homogeneous Hamiltonians with a Carnot Caratheodory structure.

The structure of the paper is as follows. In Section 2 we introduce the problem and give a motivating example. In Section 3 we introduce $C^1-$solutions of (AE) and show for them some important properties: monotonicity of the Hamiltonian on the hamiltonian dynamics, an equivalent definition and the fact that they are absolutely minimizing functions. In Section 4, we use $C^1-$solutions of (AE) as Lyapunov functions for nonlinear control systems and obtain local Lipschitz regularity of the minimum time function away from the singular set. In Section 5 we provide two new examples of explicit classical solutions of (AE) in two important cases of nonlinear control systems where the results of Section 4 apply.

\section{Control theory and the Aronsson equation}

As we mentioned in the introduction, throughout the paper we consider the controlled dynamical system (\ref{eqsystem})
where $\Omega\subset\R^n$ is open, $A$ is a nonempty,  compact subset of some metric space, $a_\cdot \in L^\infty((0,+\infty);A)$ and $f:\Omega\times A\to\R^n$ is a continuous function, continuously differentiable and uniformly Lipschitz continuous in the first group of variables, i.e.
$$|f(x^1,a)-f(x^2,a)|\leq L|x^1-x^2|\quad\mbox{for all }x^1,x^2\in\Omega,\;a\in A.$$
We suppose moreover that $f(x,A)$ is convex for every $x\in\Omega$ and that the system is symmetric, i.e. $-f(x,A)\subset f(x,A)$ for all $x\in \R^n$ and define the Hamiltonian
\begin{equation}\label{eqhamiltonian}
H(x,p)=\max_{a\in A}\{-f(x,a)\cdot p\}\in C(\Omega\times \R^n),
\end{equation}
so that $H\geq0$ and $H(x,-p)=H(x,p)$ by symmetry. Notice that $H$ is at least locally Lipschitz continuous, and $H(x,\cdot)$ is positively homogeneous of degree one by compactness of $A$.
We will also assume that $H$ is continuously differentiable on $\{(x,p):\in\Omega\times \R^{n}:H(x,p)>0\}$. 

The case we are mostly interested in the following sections is when
\begin{equation}\label{eqsigma}
f(x,a)=\sigma(x)a, \quad\sigma:\R^n\to M_{n\times m}
\end{equation}
where $M_{n\times m}$ is set of $n\times m$ matrices and $A=B_1(0)\subset\R^m$ is the closed unit ball. In this case $H(x,p)=|p\sigma(x)|$.

Given a smooth function $U\in C^1(\Omega)$ and $x_o\in \Omega\backslash{\mathcal H}$, where $\mathcal H$ is the singular set as in (\ref{eqsing}), we consider the hamiltonian dynamics
\begin{equation}\label{eqhd}
\left\{\begin{array}{ll}
\dot x_t=-H_p(x_t,\nabla U(x_t)),\\
 x_0=x_o\in\Omega,
\end{array}\right.
\end{equation}
where $H_p$ indicates the gradient of the Hamiltonian $H=H(x,p)$ with respect to the group of {\it adjoint} variables $p$.
\begin{rem}
When the Hamiltonian $H(x,\nabla U(x))$ is differentiable, notice that for $a_{x}\in A$ such that $-f(x,a_x)\cdot \nabla U(x)=H(x,\nabla U(x))$ we have that
$$-H_p(x,\nabla U(x))=f(x,a_x).$$
Therefore trajectories of (\ref{eqhd}) are indeed trajectories of the system (\ref{eqsystem}) and moreover (\ref{eqhd}) is a closed loop system of (\ref{eqsystem}) with feedback $a_x$. If in particular $f(x,a)$ is as in (\ref{eqsigma}), then, for $|p\sigma(x)|\neq0$, 
$$H(x,p)=|p\sigma(x)|, \quad H_p(x,p)=\sigma(x)\frac{^t\sigma(x) \;^tp}{H(x,p)}, \quad a_x=-\frac{^t\sigma(x)\nabla U(x)}{H(x,\nabla U(x))}\in B_1(0).$$
Therefore in this case the feedback control is at least continuous on $\Omega\backslash{\mathcal H}$ and the closed loop system always has a well defined local solution starting out on that set.
\end{rem}

We want to discuss when $H(x_t,\nabla U(x_t))$ is monotone on a trajectory $x_t$ of (\ref{eqhd}). If we can compute derivatives, then we need to discuss the sign of
$$ \frac d{dt}H(x_t,\nabla U(x_t))=\nabla(H(x_t,\nabla U(x_t)))\cdot \dot x_t=-\nabla(H(x_t,\nabla U(x_t)))\cdot H_p(x_t,\nabla U(x_t)).$$
Therefore a sufficient condition is that $U\in C^2(\Omega\backslash{\mathcal H})$ is a super or subsolution of the following pde
\begin{equation}\label{eqae}
-\nabla(H(x,\nabla U(x)))\cdot H_p(x,\nabla U(x))=0,\quad x\in \Omega\backslash{\mathcal H},
\end{equation}
which is named Aronsson equation in the literature. Notice that $H(x_t,\nabla U(x_t))$ is actually constant if $U$ is a classical solution of (\ref{eqae}). The above computation is correct only under the supposed regularity on $U$ and unfortunately if such regularity is not satisfied and we interpret super/subsolutions of (\ref{eqae}) as viscosity solutions this is no longer true in general, as the following example shows.
Notice that if $H$ is not differentiable at a point $(x_o,\nabla U(x_o))$ where $H(x_o,\nabla U(x_o))=0$, then $H_p(x_o,\nabla U(x_o))$ is multivalued, precisely the closed convex subgradient of the Lipschitz function $H(x_o,\cdot)$ computed at $
\nabla U(x_o)$and contains the origin by the symmetry of the system. Therefore the dynamics (\ref{eqhd}) has at least the constant solution also in this case. In some statements below it will be sometimes more convenient to look at (AE) for $H^2$ in order to gain regularity at points where $H$ vanishes.

\begin{exa}\label{exinfinity}
In the plane, suppose that $H^2(x,y,p_x,p_y)=(|p_x|^2+|p_y|^2)/2$ hence it is smooth and independent of the state variables. In this case (AE) becomes the well known infinity Laplace equation
$$-\Delta_\infty U(x)=-D^2U(x)\nabla U(x)\cdot\nabla U(x)=0.$$
It is easy to check that a viscosity solution of the equation is $u(x,y)=|x|^{4/3}-|y|^{4/3}$. The function $u\in C^{1,1/3}(\R^2)\backslash C^2$. Among solutions of the Hamiltonian dynamics $(\dot x_t,\dot y_t)=-\nabla U(x_t,y_t)$, we can find the following two trajectories
$$(x^{(1)}_t,y^{(1)}_t)=\left(\left(1-\frac89t\right)^{3/2},0\right),\quad(x^{(2)}_t,y^{(2)}_t)=\left(0,\left(1+\frac89t\right)^{3/2}\right),
$$
defined in a neighborhood of $t=0$. Clearly the Hamiltonian along the two trajectories is
$$H(\nabla U(x_t^{(1)},y_t^{(1)}))=\frac{2\sqrt{2}}3\sqrt{1-\frac89t},\quad H(\nabla U(x_t^{(2)},y_t^{(2)}))=\frac{2\sqrt{2}}3\sqrt{1+\frac89t},$$
it is strictly decreasing in the first case, strictly increasing in the second but it is never constant. Therefore the remark that we made at the beginning fails in this example. In the next section we are going to understand the reason.
\end{exa}

\section{Monotonicity of the Hamiltonian along the Hamiltonian dynamics}

Throughout this section, we consider a Hamiltonian not necessarily with the structure as in (\ref{eqhamiltonian}) but satisfying the following:
\begin{equation*}\tag{H1}\label{eqh1}
\begin{array}{c}
\;H:\Omega\times\R^n\to\R \hbox{ is continuous and }H(x,-p)=H(x,p),\\
H_p(x,p)\hbox{ exists and is continuous for all }
(x,p)\in \Omega\times\R^n\hbox{ if }H(x,p)>0.
\end{array}\end{equation*}
We will also refer to the following property:
\begin{equation}\tag{H2}\label{eqh2}
H(x,\cdot) \hbox{ is positively }r>0 \hbox{ homogeneous, for all }x\in\Omega.
\end{equation}
Given $U\in C^1(\Omega)$, the monotonicity of the Hamiltonian along trajectories of (\ref{eqhd}) is the object of this section. It is a consequence of the following known general result.
\begin{prop}\label{propmonotone}
Let $\Omega\subset\R^n$ be an open set and $F:\Omega\to\R^n$ be a continuous vector field. The following are equivalent:
\begin{itemize}
\item[(i)]{$V:\Omega\to\R$ is a continuous viscosity solution of $-F(x)\cdot\nabla V(x)\leq 0$ in $\Omega$.}
\item[(ii)]{The system $(V,F)$ is forward weakly increasing, i.e. for every $x_o\in\Omega$, there is a solution of the differential equation $\dot x_t=F(x_t)$, for $t\in [0,\gep)$, $x_0=x_o$ such that $V(x_s)\leq V(x_t)$ for $0\leq s\leq t$.}
\end{itemize}
Moreover the following are also equivalent
\begin{itemize}
\item[(iii)]{$V:\Omega\to\R$ is a continuous viscosity solution of $F(x)\cdot\nabla V(x)\geq 0$ in $\Omega$.}
\item[(iv)]{The system $(V,F)$ is backward weakly increasing, i.e. for every $x_o\in\Omega$, there is a solution of the differential equation $\dot x_t=F(x_t)$, for $t\in (-\gep,0]$, $x_0=x_o$ such that $V(x_s)\leq V(x_t)$ for $s\leq t\leq0$.}
\end{itemize}
\end{prop}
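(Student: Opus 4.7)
\emph{Plan.} I will prove (i)$\Leftrightarrow$(ii) in detail and reduce (iii)$\Leftrightarrow$(iv) to it via the substitution $V\mapsto -V$, $F\mapsto -F$, together with the time reversal $s\mapsto -s$ on trajectories. Under this substitution a local minimum of $V-\phi$ becomes a local maximum of the shifted test function, and a backward trajectory of $F$ becomes a forward trajectory of $-F$. A short sign check shows that (iii) turns into (i) and (iv) into (ii), so only the first equivalence requires a genuine argument.

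\emph{Easy direction (ii)$\Rightarrow$(i).} Let $\phi\in C^1(\Omega)$ and let $x_0\in\Omega$ be a local maximum of $V-\phi$. By (ii) there exists $x_t$ on $[0,\varepsilon)$ with $\dot x_t=F(x_t)$, $x_0$ as starting point, and $V(x_t)\geq V(x_0)$. Since $x_0$ is a local maximum of $V-\phi$, $V(x_t)-V(x_0)\leq \phi(x_t)-\phi(x_0)$ for $t$ small, whence $0\leq \phi(x_t)-\phi(x_0)$. Dividing by $t$ and letting $t\to 0^+$ gives $\nabla\phi(x_0)\cdot F(x_0)\geq 0$, i.e., $-F(x_0)\cdot\nabla\phi(x_0)\leq 0$, which is exactly the viscosity subsolution condition.

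\emph{Hard direction (i)$\Rightarrow$(ii).} This is the substantial part, and the plan is to use the standard weak-invariance argument. Approximate $F$ uniformly on a neighborhood of $x_o$ by Lipschitz vector fields $F_k$, so that the Cauchy problem $\dot y^k_t=F_k(y^k_t)$, $y^k_0=x_o$, has a unique $C^1$ solution. Regularize $V$ by the sup-convolution $V^\varepsilon(x)=\sup_{y\in\Omega}\{V(y)-|x-y|^2/(2\varepsilon)\}$; standard viscosity theory gives that $V^\varepsilon$ is semiconvex, decreases to $V$, and retains the subsolution property $-F\cdot\nabla V^\varepsilon\leq 0$ up to an error vanishing as $\varepsilon\to 0^+$ on a slightly smaller set. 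Being semiconvex, $V^\varepsilon$ is differentiable almost everywhere, so one may differentiate $V^\varepsilon(y^k_t)$ along the trajectory; using $\|F_k-F\|_\infty\to 0$, the subsolution inequality yields $V^\varepsilon(y^k_t)-V^\varepsilon(x_o)\geq -\eta(k,\varepsilon)$ for some $\eta$ going to $0$. Ascoli--Arzel\`a then extracts a uniformly convergent subsequence of $(y^k)$ whose limit $x_t$ is a Peano trajectory of $F$, and sending $k\to\infty$ then $\varepsilon\to 0^+$ recovers the monotonicity of $t\mapsto V(x_t)$.

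\emph{Main obstacle.} The central difficulty is precisely this last step: $F$ is merely continuous, so solutions of $\dot x=F(x)$ are not unique and no particular trajectory is canonically singled out, while the viscosity subsolution property gives only test-function information, not pointwise control of $V$ along a prescribed curve. The twin regularization $F\rightsquigarrow F_k$ (which restores uniqueness and $C^1$ regularity of trajectories) and $V\rightsquigarrow V^\varepsilon$ (which restores a.e.\ differentiability of $V$) bridges this gap by trading regularity between the two sides, so that classical calculus becomes applicable to the regularized objects before one returns to the original problem by a compactness argument. Once (i)$\Leftrightarrow$(ii) is established, (iii)$\Leftrightarrow$(iv) follows at once from the substitution and time reversal described above.
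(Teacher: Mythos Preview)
First, note that the paper does not itself prove this proposition: the Remark following Corollary~\ref{corclarke} attributes the proof to \cite{clsw0} (see also \cite{clsw}, and \cite{bcd,soopt} under additional regularity on $F$). So there is no in-paper argument to compare with; the relevant question is whether your sketch is sound. Your direction (ii)$\Rightarrow$(i) and the reduction of (iii)$\Leftrightarrow$(iv) to (i)$\Leftrightarrow$(ii) via $(V,F)\mapsto(-V,-F)$ together with time reversal are correct.

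The gap is in (i)$\Rightarrow$(ii). Your assertion that the sup-convolution $V^\varepsilon$ ``retains the subsolution property $-F\cdot\nabla V^\varepsilon\le0$ up to an error vanishing as $\varepsilon\to0^+$'' is not justified when $F$ and $V$ are merely continuous. At a differentiability point $x$ of $V^\varepsilon$ with maximizer $y^*$, one has $\nabla V^\varepsilon(x)\in D^+V(y^*)$, hence $-F(y^*)\cdot\nabla V^\varepsilon(x)\le0$; replacing $F(y^*)$ by $F(x)$ costs $|F(x)-F(y^*)|\,|\nabla V^\varepsilon(x)|$. Since $|x-y^*|\le C\sqrt{\varepsilon}$ while $|\nabla V^\varepsilon(x)|=|x-y^*|/\varepsilon$ may be as large as $C/\sqrt{\varepsilon}$, the resulting error is of order $\omega_F(C\sqrt{\varepsilon})/\sqrt{\varepsilon}$, which need not tend to $0$ for a general modulus $\omega_F$. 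The same $1/\sqrt{\varepsilon}$ blow-up contaminates your $\eta(k,\varepsilon)$ after integrating along $y^k$, so the double limit does not close. (If $V$ were locally Lipschitz one would get $|x-y^*|\le C\varepsilon$ and bounded $|\nabla V^\varepsilon|$, and then your scheme would go through; but only continuity is assumed.) The proofs in \cite{clsw0,clsw} avoid this difficulty by not regularizing $V$ at all: they translate the viscosity inequality into a tangency condition for $F$ on the superlevel sets $\{V\ge r\}$ via proximal normals, and then produce the trajectory by a weak-invariance theorem for closed sets under continuous dynamics, built from Euler polygonal approximations. That route handles the merely-continuous case directly, without the $1/\sqrt{\varepsilon}$ loss.
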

\begin{cor}\label{corclarke}
Let $\Omega\subset\R^n$ be an open set and $F:\Omega\to\R^n$ be a continuous vector field. The following are equivalent:
\begin{itemize}
\item[(i)]{$V:\Omega\to\R$ is a continuous viscosity solution of $-F(x)\cdot\nabla V(x)\leq 0$ and of $F(x)\cdot\nabla V(x)\geq 0$ in $\Omega$.}
\item[(ii)]{The system $(V,F)$ is weakly increasing, i.e. for every $x_o\in\Omega$, there is a solution of the differential equation $\dot x_t=F(x_t)$, for $t\in (-\gep,\gep)$, $x_0=x_o$ such that $V(x_s)\leq V(x_t)$ for $s\leq t$.}
\end{itemize}
\end{cor}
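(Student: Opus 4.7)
The plan is to obtain the corollary directly from Proposition \ref{propmonotone} by applying each of its two equivalences separately and then gluing the resulting forward and backward trajectories at the base point $x_o$.

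For the implication (i)$\Rightarrow$(ii), suppose $V$ satisfies both viscosity inequalities and fix $x_o\in\Omega$. The inequality $-F(x)\cdot\nabla V(x)\leq 0$ together with (i)$\Leftrightarrow$(ii) of Proposition \ref{propmonotone} yields a forward solution $x^+:[0,\gep_1)\to\Omega$ of $\dot x_t=F(x_t)$ with $x^+_0=x_o$ along which $V(x^+_s)\leq V(x^+_t)$ whenever $0\leq s\leq t$. The inequality $F(x)\cdot\nabla V(x)\geq 0$ together with (iii)$\Leftrightarrow$(iv) of Proposition \ref{propmonotone} yields a backward solution $x^-:(-\gep_2,0]\to\Omega$ with $x^-_0=x_o$ along which $V(x^-_s)\leq V(x^-_t)$ whenever $s\leq t\leq 0$. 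Concatenating these curves at $t=0$ produces a $C^1$ curve $x:(-\gep_2,\gep_1)\to\Omega$ that solves $\dot x_t=F(x_t)$ on the whole interval (at $t=0$ both one-sided derivatives equal $F(x_o)$). Setting $\gep=\min\{\gep_1,\gep_2\}$ gives a trajectory defined on $(-\gep,\gep)$.

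To verify the weak increase property, fix $s\leq t$ in $(-\gep,\gep)$ and split into three cases. If $0\leq s\leq t$, monotonicity comes directly from the forward piece. If $s\leq t\leq 0$, it comes from the backward piece. If $s\leq 0\leq t$, we chain
$$V(x_s)\leq V(x_0)=V(x_o)\leq V(x_t),$$
using the backward piece on $[s,0]$ and the forward piece on $[0,t]$. This proves (ii).

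The reverse implication (ii)$\Rightarrow$(i) is immediate: a trajectory that is weakly increasing on $(-\gep,\gep)$ is in particular forward weakly increasing on $[0,\gep)$ and backward weakly increasing on $(-\gep,0]$, so Proposition \ref{propmonotone} gives both viscosity inequalities. The only delicate point in the argument is the concatenation step, but since the two trajectories share the common initial value $x_o$ at $t=0$ and $F$ is continuous, the glued curve is automatically a classical solution of the ODE on the full interval, so no further regularity issue arises.
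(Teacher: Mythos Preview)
Your argument is correct and is exactly the natural derivation one expects when the paper labels this a ``corollary'' of Proposition~\ref{propmonotone}: apply the two equivalences of the proposition to obtain a forward and a backward monotone trajectory through $x_o$, concatenate them at $t=0$, and observe that the glued curve is still a classical solution because both one-sided derivatives equal $F(x_o)$. The paper itself does not write out this derivation; it simply states the corollary and, in the remark that follows, refers the reader to \cite{clsw0,clsw} (and to \cite{bcd,soopt} under additional regularity of $F$) for proofs of both the proposition and the corollary. So your proof makes explicit what the paper leaves implicit, and there is no substantive difference in approach.
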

\begin{rem}
The proof of the previous statement can be found in \cite{clsw0}, see also \cite{clsw}.
When $F\in C^1$ another proof can be found in Proposition 5.18 of \cite{bcd} or can be deduced from the optimality principles in optimal control proved in  \cite{soopt}, when $F$ is locally Lipschitz continuous. In the case when $F$ is locally Lipschitz, the two differential inequalities in (i) of Corollary \ref{corclarke} turn out to be equivalent and of course there is also uniqueness of the trajectory of the dynamical system $\dot x=F(x)$, $x(0)=x_o$. When (ii) in the Corollary is satisfied by all trajectories of the dynamical system then the system is said to be strongly monotone. This occurs in particular if there is at most one trajectory, as when
$F$ is locally Lipschitz continuous. More general sufficient conditions for strong monotonicity can be found in \cite{clsw}, see also \cite{drw}.
\end{rem}

In view of the above result, we introduce the following definition.
\begin{defin}\label{defcone} Let $\Omega\subset\R^n$ be open and let $H:\Omega\times\R^n\to\R$ satisfying (\ref{eqh1}). We say that a function $U\in C^1(\Omega)$ is a $C^1-$supersolution (resp. subsolution) of the Aronsson equation (\ref{eqae}) in $\Omega$, if setting $V(x)=H(x,\nabla U(x))$ and $F(x)=-H_p(x,\nabla U(x))$ we have that $V$ is a viscosity subsolution (resp. supersolution) of
$-F(x)\cdot\nabla V(x)=0$ and a supersolution (resp. a subsolution) of $F(x)\cdot\nabla V(x)=0$.
\end{defin}
It is worth pointing out explicitely the consequence we have reached by Proposition \ref{propmonotone}.
\begin{cor}
Let $U\in C^1(\Omega)$ be a $C^1-$supersolution (resp, subsolution) of (\ref{eqae}). For $x_o\in\Omega\backslash{\mathcal H}$, then there is a trajectory $x_t$ of the Hamiltonian dynamics (\ref{eqhd}) such that $H(x_t,\nabla U(x_t))$ is nondecreasing (resp. nonincreasing).
\end{cor}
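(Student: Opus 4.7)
The plan is to deduce the corollary directly from Proposition \ref{propmonotone} applied on the open set $\Omega \setminus \mathcal H$, with the data that Definition \ref{defcone} attaches to $U$. First I would set $V(x) = H(x, \nabla U(x))$ and $F(x) = -H_p(x, \nabla U(x))$ and note that hypothesis (\ref{eqh1}), together with $U \in C^1(\Omega)$, makes both $V$ and $F$ continuous on $\Omega \setminus \mathcal H$, since $H(x, \nabla U(x)) > 0$ there forces $H_p(x, \nabla U(x))$ to exist and depend continuously on $x$. By construction the vector field driving the Hamiltonian dynamics (\ref{eqhd}) is exactly this $F$, so any solution of $\dot x_t = F(x_t)$ through $x_o$ is a trajectory of (\ref{eqhd}).

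For the supersolution case, Definition \ref{defcone} states precisely that $V$ is a viscosity subsolution of $-F(x)\cdot\nabla V(x) = 0$ on $\Omega \setminus \mathcal H$, which is hypothesis (i) of Proposition \ref{propmonotone} applied to this open set. The implication (i) $\Rightarrow$ (ii) then produces, for the given $x_o \in \Omega \setminus \mathcal H$, a local solution $x_t$, $t \in [0,\varepsilon)$, of $\dot x_t = F(x_t)$, i.e.\ of (\ref{eqhd}), along which $V(x_s) \le V(x_t)$ for $0 \le s \le t$. In other words, $H(x_t, \nabla U(x_t))$ is nondecreasing on this trajectory, which is exactly the claim.

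For the subsolution case, a short sign switch reduces matters to the previous argument. Setting $W = -V$, testing $V$ from below (resp.\ above) with $\psi$ corresponds to testing $W$ from above (resp.\ below) with $-\psi$, so the two viscosity inequalities of Definition \ref{defcone} for the subsolution case translate into $W$ being a viscosity subsolution of $-F\cdot\nabla W = 0$ (and a supersolution of $F\cdot\nabla W = 0$) on $\Omega \setminus \mathcal H$. Applying Proposition \ref{propmonotone} to $W$ yields a local trajectory of $\dot x_t = F(x_t)$ along which $W$ is nondecreasing, whence $V = H(\cdot, \nabla U(\cdot))$ is nonincreasing, as required.

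The only place that needs care is the viscosity bookkeeping: in the viscosity framework the conditions ``subsolution of $-F\cdot\nabla V = 0$'' and ``supersolution of $F\cdot\nabla V = 0$'' are genuinely distinct (they test from opposite sides), so each half of Definition \ref{defcone} must be paired with the correct implication of Proposition \ref{propmonotone}. Once that pairing is in place the corollary is essentially immediate, with the entire substance supplied by the proposition.
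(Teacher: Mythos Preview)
Your proof is correct and follows exactly the route the paper intends: the corollary is stated without proof as an immediate consequence of Proposition~\ref{propmonotone} applied with $V(x)=H(x,\nabla U(x))$ and $F(x)=-H_p(x,\nabla U(x))$ on $\Omega\setminus\mathcal H$, and you have simply spelled out the details (continuity of $V,F$ via (\ref{eqh1}), and the $W=-V$ sign flip for the subsolution case). The only comment is that the paper leaves implicit precisely the bookkeeping you flag at the end, so your care in matching each half of Definition~\ref{defcone} with the correct implication of Proposition~\ref{propmonotone} is well placed.
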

\begin{rem}
\begin{itemize}
\item{Notice that if $U$ is a $C^1-$solution of (\ref{eqae}) and the Hamiltonian dynamics (\ref{eqhd}) is either strongly decreasing and strongly increasing, as for instance if it has a unique solution for a given initial condition, then for all trajectories $x_t$ of (\ref{eqhd}), $H(x_t,\nabla U(x_t))$ is constant.
}
\item{In order to comment back to Example \ref{exinfinity}, notice that while $U(x,y)=|x|^{4/3}-|y|^{4/3}$ is a $C^1$ function, nevertheless, as easily checked, $V(x,y)=H^2(\nabla U(x,y))=16(|x|^{2/3}+|y|^{2/3})/9$ is only a viscosity subsolution but not a supersolution of
$$-\nabla V(x)\cdot (-H_p^2(\nabla U(x)))=0,$$
while it is a viscosity solution of $\nabla V(x)\cdot (-H_p^2(\nabla U(x)))=0$.
Then it turns out that the Hamiltonian is weakly increasing on the trajectories of the Hamiltonian dynamics.
Indeed there is another trajectory of the Hamiltonian dynamics such that $(x^{(3)}(0),y^{(3)}(0))=(1,0)=(x^{(1)}(0),y^{(1)}(0))$, namely
$$(x^{(3)}(t),y^{(3)}(t))=\left(\left(1-\frac89t\right)^{3/2},\left(\frac89t\right)^{3/2}\right)$$
along which the Hamiltonian is actually constant, until the trajectory is well defined. 
}
\item{It is clear by Example \ref{exinfinity} that while classical $C^2$ solutions of (\ref{eqae}) are $C^1-$solutions, continuous or even $C^1$ viscosity solutions in general are not. The definition of $C^1-$solution that we introduced is meant to preserve the monotonicity property of the Hamiltonian on the trajectories of the Hamiltonian dynamics.
}
\item{Observe that if $U$ is a $C^1-$solution, then $-U$ is a $C^1-$solution as well, since the Hamiltonian is unchanged and the vector field in the Hamiltonian dynamics becomes the opposite.
}\end{itemize}
\end{rem}

It may look unpleasant that Definition \ref{defcone} of solution of (\ref{eqae}) refers to a property that is not formulated directly for the function $U$. Therefore in the next statement we will reformulate the above definition. The property (ED) below will give an equivalent definition of a $C^1-$solution.
\begin{prop}
Let $U\in C^1(\Omega)$ and $H$ satisfying (\ref{eqh1}), (\ref{eqh2}). The following two statements are equivalent:
\begin{itemize}
\item[(ED)]{ for all $x_o\in\Omega\backslash{\mathcal H}$, there is a trajectory $x_t$ of the Hamiltonian dynamics (\ref{eqhd}), such that if $\varphi\in C^2([0,\gep))\cup C^2([(-\gep,0])$ is a test function and $U(x_t)-\varphi(t)$ has a minimum (respectively maximum) at $0$ and $\dot\varphi(0)=\frac{d}{dt}U(x_t)_{t=0}$, then we have that
$$-\ddot \varphi(0)\geq0\;(\hbox{resp. }\leq0).$$}
\item{$U$ is a $C^1-$supersolution (resp. subsolution) of (\ref{eqae}).}
\end{itemize}
In particular, if $H$ is $C^1$ at $\{(x,p):H(x,p)\neq0\}$, a $C^1-$supersolution (resp. subsolution) is a viscosity supersolution (resp. subsolution) of 
(\ref{eqae}).
\end{prop}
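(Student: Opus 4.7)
The argument pivots on the identity
\[
\frac{d}{dt}U(x_t)=-\nabla U(x_t)\cdot H_p(x_t,\nabla U(x_t))=-rH(x_t,\nabla U(x_t))=-rV(x_t),
\]
valid along any trajectory of (\ref{eqhd}) by Euler's identity for the positively $r$-homogeneous function $H(x,\cdot)$. Setting $g(t):=U(x_t)$, the continuity of $V$ on $\Omega\setminus\mathcal H$ gives $g\in C^1$ with continuous derivative $g'(t)=-rV(x_t)$, so monotonicity of $V$ along $x_t$ becomes concavity/convexity of $g$. This is the bridge I will use to translate (ED) into the $C^1$-super/subsolution property at the level of $V$ and $F$.

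\textbf{$C^1$-supersolution $\Rightarrow$ (ED).} By Corollary \ref{corclarke} the system $(V,F)$ is weakly increasing, so at every $x_o\in\Omega\setminus\mathcal H$ I can pick a trajectory $x_t$ on some $(-\varepsilon,\varepsilon)$ along which $V(x_t)$ is nondecreasing. Then $g'$ is nonincreasing, $g$ is $C^1$ concave, and $g(t)\leq g(0)+g'(0)t$. For any $\varphi\in C^2$ on $[0,\varepsilon')$ (or on $(-\varepsilon',0]$) touching $g$ from below at $0$ with $\dot\varphi(0)=g'(0)$, combining $\varphi(t)\leq g(t)\leq\varphi(0)+\dot\varphi(0)t$ with the Taylor expansion of $\varphi$ around $0$ and dividing by $t^2$ yields $\ddot\varphi(0)\leq 0$. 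The subsolution case is dual, with $V$ nonincreasing, $g$ convex and $\ddot\varphi(0)\geq 0$.

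\textbf{(ED) $\Rightarrow$ $C^1$-supersolution.} Fix $\psi\in C^1(\Omega)$ touching $V$ from below at $x_o$ (the test for supersolution of $F\cdot\nabla V=0$) and let $x_t$ be the trajectory furnished by (ED). Define the $C^2$ one-dimensional test function
\[
\varphi(t)=U(x_o)-r\int_0^t\psi(x_s)\,ds,
\]
so that $\dot\varphi(t)=-r\psi(x_t)$ and $\ddot\varphi(t)=-r\nabla\psi(x_t)\cdot F(x_t)$. From $\psi(x_s)\leq V(x_s)=-g'(s)/r$ near $0$ one has $(g-\varphi)'(s)=r(\psi(x_s)-V(x_s))\leq 0$, and since $(g-\varphi)(0)=0$ this forces $g-\varphi\geq 0$ on some $(-\delta,0]$ with a minimum at $0$; the first-derivative matching $\dot\varphi(0)=-rV(x_o)=g'(0)$ is automatic. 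Applying (ED) gives $\ddot\varphi(0)\leq 0$, i.e.\ $F(x_o)\cdot\nabla\psi(x_o)\geq 0$. The remaining subsolution condition on $-F\cdot\nabla V=0$ is obtained symmetrically by taking $\psi$ touching $V$ from above at $x_o$ and using the interval $[0,\delta)$; the subsolution variant of the proposition is proved by reversing all signs.

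\textbf{The ``in particular'' clause, and the main obstacle.} Under the extra hypothesis that $H\in C^1$ on $\{H>0\}$, for a standard test $\varphi\in C^2$ with $U-\varphi$ having a minimum at $x_o\in\Omega\setminus\mathcal H$, I would feed the $C^1$ function $\chi(x):=H(x,\nabla\varphi(x))$ into the $C^1$-supersolution inequality for $V$. Since $\nabla\varphi(x_o)=\nabla U(x_o)$ one has $\chi(x_o)=V(x_o)$, and the supersolution condition $F(x_o)\cdot\nabla\chi(x_o)\geq 0$ expands via the chain rule into
\[
-\bigl[H_x(x_o,\nabla U(x_o))+D^2\varphi(x_o)\,H_p(x_o,\nabla U(x_o))\bigr]\cdot H_p(x_o,\nabla U(x_o))\geq 0,
\]
which is the viscosity supersolution inequality for (\ref{eqae}). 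The main obstacle I anticipate is precisely verifying the admissibility of $\chi$, namely $V\geq\chi$ locally near $x_o$: convexity of $H(x,\cdot)$ only yields $V-\chi\geq H_p(x,\nabla\varphi(x))\cdot(\nabla U-\nabla\varphi)(x)$, and since $U$ is merely $C^1$ the right-hand side is $o(1)$ without a definite sign, so extra work is needed to control $\chi$ in the relevant direction along the (ED)-trajectory, for instance by subtracting an appropriate quadratic correction tied to the direction $F(x_o)$.
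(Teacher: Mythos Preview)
Your argument for the equivalence (ED) $\Leftrightarrow$ $C^1$\nobreakdash-super/subsolution is essentially the paper's: the same integral test function $\varphi(t)=U(x_o)-r\int_0^t\psi(x_s)\,ds$ for one direction, and concavity of $t\mapsto U(x_t)$ (coming from monotonicity of $V$ along the weakly increasing trajectory) for the other. The only cosmetic difference is that the paper argues the concavity step by contradiction (assuming $\ddot\varphi(0)>0$ and using strict convexity of $\varphi$), while you use a direct Taylor expansion.

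Where you diverge is the ``in particular'' clause, and the obstacle you flag is real for the route you chose: there is no reason for $\chi(x)=H(x,\nabla\varphi(x))$ to touch $V$ from the required side near $x_o$, and a quadratic correction along $F(x_o)$ does not obviously repair this. The paper bypasses the issue entirely by staying on the trajectory rather than returning to space. Given $\Phi\in C^2$ with $U-\Phi$ attaining a minimum at $x_o$, it sets $\varphi(t):=\Phi(x_t)$ on the (ED)\nobreakdash-trajectory through $x_o$; then $U(x_t)-\varphi(t)=(U-\Phi)(x_t)\ge 0$ with equality at $t=0$, and since $\nabla U(x_o)=\nabla\Phi(x_o)$ the first derivatives match automatically, so (ED) gives $-\ddot\varphi(0)\ge 0$. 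To identify this with the Aronsson operator on $\Phi$, the paper rewrites $-\dot\varphi(t)$ via Euler's identity as $\nabla\Phi(x_t)\cdot H_p(x_t,\nabla\Phi(x_t))=rH(x_t,\nabla\Phi(x_t))$, which is genuinely $C^1$ in $t$ (only $\Phi\in C^2$, $H\in C^1$ and $x_\cdot\in C^1$ enter), and one more $t$\nobreakdash-derivative at $0$ yields $-r\nabla\bigl(H(x,\nabla\Phi(x))\bigr)\big|_{x_o}\cdot H_p(x_o,\nabla\Phi(x_o))$. This sidesteps your admissibility problem; the one step that still deserves a word of care is that the substitution $\nabla U(x_t)\to\nabla\Phi(x_t)$ inside $H_p$ is literal only at $t=0$, so equating $\ddot\varphi(0)$ with the derivative of $rH(x_t,\nabla\Phi(x_t))$ is not a pure chain-rule identity but uses the touching condition.
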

\begin{rem}
In the statement of (ED), when the hamiltonian vector field $F(x)=-H_p(x,\nabla U(x))$ is locally Lipschitz continuous, we may restrict the test functions to $\varphi\in C^2(-\gep,\gep)$.
\end{rem}
\begin{proof}
We only prove the statement for supersolutions, the other case being similar. Let $U\in C^1(\Omega)$.

Suppose first that (ED) holds true. Let $V(x)=H(x,\nabla U(x))$ and $\Phi\in C^1(\Omega)$ such that $V-\Phi$ has a maximum at $x_o$, $V(x_o)=\Phi(x_o)$. Therefore if $x_t$ is a solution of the hamiltonian dynamics (\ref{eqhd}) that satisfies (ED), we have that, by homogeneity of $H(x,\cdot)$ and for $F(x)=-H_p(x,\nabla U(x))$,
\begin{equation*}
r\Phi(x_t)\geq rV(x_t)=rH(x_t,\nabla U(x_t))=-\nabla U(x_t)\cdot F(x_t)=-\frac d{dt}U(x_t).
\end{equation*}
Thus integrating for small $t>0$ we get
\begin{equation}\label{eqmis}
\varphi(t):=U(x_o)-r\int_0^t\Phi(x_s)\;ds\leq U(x_t),
\end{equation}
and thus $U(x_t)-\varphi(t)$ has a minimum at $t=0$ on $[0,\gep)$ for $\gep$ small and $\dot\varphi(0)=-r\Phi(x_t)=\frac{d}{dt}U(x_t)|_{t=0}$. If instead $V-\Phi$ had a minimum at $x_o$, then integrating on $(t,0]$ for $t<0$ small enough, we would still obtain the same as in (\ref{eqmis}). 
By (ED), from (\ref{eqmis}) we get in both cases
$$0\geq\ddot\varphi(0)=-r\frac d{dt}\Phi(x_t)|_{t=0}=-r\nabla \Phi(x_o)\cdot F(x_o),$$
where $F(x)=-H_p(x,\nabla U(x))$. Therefore we conclude that $V$ is a viscosity subsolution of $-\nabla V\cdot F\leq 0$ (or a supersolution of $\nabla V\cdot F\geq 0$ when $V-\phi$ has a minimum st $x_o$). Finally by definition, $U$ is a $C^1-$supersolution of (\ref{eqae}).

Suppose now that $U$ is a $C^1-$supersolution of (\ref{eqae}). Then by Proposition \ref{propmonotone}, for all $x_o\in \Omega\backslash{\mathcal H}$, we can find a trajectory $x_t$ of the dynamics (\ref{eqhd}) such that $rV(x_t)=-\frac d{dt}U(x_t)$ is nondecreasing. Therefore $U(x_t)$ is a concave function of $t$. 
Let $\varphi\in C^2((-\gep,0])\cup C^2([0,\gep))$ be such that $U(x_t)-\varphi(t)$ has a minimum at $t=0$, $U(x_o)=\varphi(0)$ and $\frac d{dt}U(x_t)|_{t=0}=\dot\varphi(0)$. If we had $\ddot\varphi(0)>0$ then $\varphi$ would be strictly convex in its domain. Therefore for $t\neq0$ small enough, and in the domain of $\varphi$,
$$U(x_t)\geq\varphi(t)>\varphi(0)+\dot\varphi(0)t=U(x_o)+\frac d{dt}U(x_t)|_{t=0}t\geq U(x_t),
$$
by concavity of $U(x_t)$. This is a contradiction.

We prove the last statement on the fact that a $C^1-$solution is a viscosity solution. Therefore for a $C^1-$supersolution $U$ of (\ref{eqae}) let now $\Phi\in C^2(\Omega)$ be such that $U-\Phi$ has a minimum at $x_o$. By (ED), for a suitable solution $x_t$ of (\ref{eqhd}) we have that $U(x_t)-\varphi(t)$ has a minimum at $t=0$ if $\varphi(t)=\Phi(x_t)$, in particular $\dot\varphi(0)=\frac{d}{dt}U(x_t)_{t=0}$.
By (ED) and homogeneity of $H(x,\cdot)$,
$$\begin{array}{l}
0\leq -\ddot\varphi(0)=\frac d{dt}\nabla\Phi(x_t)\cdot H_p(x_t,\Phi(x_t))|_{t=0}=r\frac d{dt}H(x_t,\nabla \Phi(x_t))|_{t=0}
\\=-r\nabla(H(x_o,\nabla \Phi(x_o)))\cdot H_p(x_o,\nabla \Phi(x_o)).
\end{array}$$
Therefore $U$ is a viscosity supersolution of (\ref{eqae}). The case of subsolutions is similar and we skip it.
\end{proof}

We end this section by proving another important property of $C^1-$ solutions of (\ref{eqae}) that in the literature was the main motivation to the study of (AE).
\begin{thm}\label{teoam}
Let $\Omega\subset\R^n$ open and bounded, $H$ satisfying (\ref{eqh1}), and having the structure (\ref{eqhamiltonian}).
Let $U\in C^1(\Omega)\cap C({\overline\Omega})$ be a $C^1-$solution of (\ref{eqae}). For any function $W\in C({\overline\Omega})$ such that :
\begin{equation}\label{eqam}\left\{\begin{array}{ll}
H(x,\nabla W(x))\leq k\in\R,\quad& x\in\Omega,\\
W(x)=U(x),&x\in\partial\Omega
\end{array}\right.
\end{equation}
in the viscosity sense, then $H(x,\nabla U(x))\leq k$ in $\Omega$.
\end{thm}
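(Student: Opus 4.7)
The plan is to argue by contradiction. Suppose there is $x_o\in\Omega$ with $V(x_o):=H(x_o,\nabla U(x_o))>k$. I would then construct a single trajectory of the Hamiltonian dynamics joining two points on $\partial\Omega$, along which $V\geq V(x_o)>k$, and derive contradictory upper and lower bounds on the increment of $U$ along it.

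For the trajectory: since $U$ is a $C^1$-supersolution, Proposition \ref{propmonotone} together with Corollary \ref{corclarke} yields a trajectory $x^+_t$ of $\dot x=-H_p(x,\nabla U(x))$, with $x^+_0=x_o$, on which $V$ is nondecreasing. Since $U$ is also a $C^1$-subsolution, the symmetric reformulation gives (with the inequality $F\cdot\nabla V\leq0$ read in both viscosity senses) a trajectory $y_t$ of $\dot y=+H_p(y,\nabla U(y))$ starting at $x_o$ on which $V$ is also nondecreasing; by the system symmetry $-f(x,A)\subset f(x,A)$, this $+H_p$ field is admissible, and time-reversing $y$ produces a backward trajectory $x^-_t$ of $\dot x=-H_p$. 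Concatenating $x^-$ and $x^+$ at $t=0$ yields a trajectory $x_t$ on a maximal interval $(a,b)$ around $0$ along which $V(x_t)\geq V(x_o)>k>0$; because $V$ stays away from $\mathcal H$, the field $-H_p(\cdot,\nabla U(\cdot))$ stays continuous on the trajectory, so by Peano-type extension $x_t$ cannot get trapped in a compact subset of $\Omega$ and must exit through $\partial\Omega$ at both ends, extending by continuity to $[a,b]$ with $x_a,x_b\in\partial\Omega$.

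Next, using positive $1$-homogeneity of $H(x,\cdot)$, Euler's identity gives $\nabla U(x_t)\cdot H_p(x_t,\nabla U(x_t))=H(x_t,\nabla U(x_t))=V(x_t)$ along the trajectory, so
\begin{equation*}
U(x_b)-U(x_a)=\int_a^b\nabla U(x_t)\cdot\dot x_t\,dt=-\int_a^b V(x_t)\,dt\leq -V(x_o)(b-a)<-k(b-a).
\end{equation*}
On the other hand, $W$ is a continuous viscosity subsolution of $H(x,\nabla W)\leq k$ with $H$ of the form (\ref{eqhamiltonian}); by the standard optimality principle for such convex Hamilton-Jacobi inequalities (see \cite{bcd}), $W(x_a)-W(x_b)\leq k(b-a)$ along every admissible control trajectory staying in $\Omega$, and $x_t$ is one such trajectory. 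Since $W=U$ on $\partial\Omega$ and $x_a,x_b\in\partial\Omega$, this yields $U(x_a)-U(x_b)\leq k(b-a)$, contradicting the previous strict inequality.

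The main obstacle is the trajectory construction, because without uniqueness for the Hamiltonian dynamics the forward and backward monotonicities given by the super- and subsolution conditions need not live on the same trajectory; the work is to verify that the concatenation is genuinely an admissible control trajectory (via symmetry of $f$) and that on $\{V>k\}$ it can be extended all the way to $\partial\Omega$. A minor technical step, routine but worth stating, is the passage from the viscosity inequality $H(x,\nabla W)\leq k$ to the finite-increment bound $W(x_a)-W(x_b)\leq k(b-a)$ for $W$ only continuous, which is where the structural form (\ref{eqhamiltonian}) of $H$ is essential.
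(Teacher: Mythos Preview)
Your proof is correct and follows essentially the same route as the paper: concatenate a forward trajectory from the supersolution property with a backward one obtained from the subsolution property (via the reversed field $+H_p$), keep $V\geq V(x_o)$ along it, exit $\partial\Omega$ on both sides, and compare the $U$-increment with the optimality-principle bound for $W$; the contradiction framing versus the paper's direct inequality $V(x_o)\leq k$ is purely cosmetic. One small imprecision: Peano-type extension alone does not force the trajectory to leave every compact subset of $\Omega$; the exit in finite time follows instead from your own displayed identity $\frac{d}{dt}U(x_t)=-V(x_t)\leq -V(x_o)<0$ together with the boundedness of $U\in C(\overline\Omega)$, exactly as the paper argues.
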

\begin{rem}
When $D\subset\R^n$ is an open set and 
the property of a function $U\in C^1(D)$ in Theorem (\ref{teoam}) holds for all open subsets $\Omega\subset D$ then we say that $U$ is an {\it Absolutely minimizing function} in $D$ for the Hamiltonian $H$. This means that $U$ is a local minimizer of $\|H(\cdot,\nabla U(\cdot))\|_{L^\infty}$. It is well known that for the infinity Laplace equation, where we minimize the Lipschitz constant of $U$, it is equivalent to be a viscosity solution and an absolutely minimizing function. Such equivalence is also known for coercive Hamiltonians and for the norm of the horizontal gradient in some Carnot Caratheodory spaces. For more general Hamiltonians this equivalence is not known. Here we prove one implication at least for $C^1-$ solutions of (\ref{eqae}).
\end{rem}
\begin{proof}
Let $U,W$ be as in the statement and suppose for convenience that $H(x,\cdot)$ is positively 1-homogeneous.
We define $V(x)=H(x,\nabla U(x))\geq0$ and look at solutions $x_t$ of the Hamiltonian dynamics (\ref{eqhd}).
If $V(x_o)=0$, then clearly $V(x_o)\leq k$ and we have nothing left to show. If otherwise $V(x_o)>0$ since $U$ is a $C^1-$solution of (\ref{eqae}), we already know that 
we can construct a solution of (\ref{eqhd})  starting out at $x_o\in\Omega$ such that
$V(x_t)$ is nondecreasing for $t\geq0$ and nonincreasing for $t\leq0$ (by a concatenation of two  trajectories of (\ref{eqhd}) with monotone Hamiltonian). Since $\Omega$ is bounded, then the curve $x_t$ will not stay indefinitely in $\Omega$ because as we already observed
$$U(x_t)-U(x_o)\leq -\int_0^tV(x_s)\;ds\leq-t V(x_o),\quad \hbox{for }t\geq 0,$$
and
$$U(x_t)-U(x_o)\geq -t V(x_o),\quad \hbox{for }t\leq 0.$$
Hence $x_t$ will hit $\partial\Omega$ forward and backward in finite time. Let $t_1<0<t_2$ be such that $x_{t_1},x_{t_2}\in\partial\Omega$ and $x_t\in \Omega$ for $t\in(t_1,t_2)$. Therefore
\begin{equation}\label{eqab}
U(x_{t_2})+t_2V(x_o)\leq U(x_o)\leq U(x_{t_1})+t_1V(x_o)
\end{equation}
and then 
\begin{equation}\label{eqaa}
W(x_{t_1})-W(x_{t_2})=U(x_{t_1})-U(x_{t_2})\geq (t_2-t_{1})V(x_o).
\end{equation}
Now we use the differential inequality (\ref{eqam}) in the viscosity sense and the lower optimality principle in control theory as in \cite{soopt} for subsolutions of the Hamilton-Jacobi equation. Therefore since $x_t$ is a trajectory of the control system (\ref{eqsystem}) we have that for all $\gep>0$ and $t_1+\gep<t<t_2$, as $x_s\in\Omega$ for $s\in[t_1+\gep,t]$,
$$W(x_{t_1+\gep})\leq k(t-t_1-\gep)+W(x_{t}).$$
By letting $t\to t_2-$ and $\gep\to0+$ we conclude, by continuity of $W$ at the boundary of $\Omega$ and (\ref{eqaa}),
$$V(x_o)(t_2-t_1)\leq W(x_{t_1})-W(x_{t_2})\leq k(t_2-t_1)$$
which is what we want.
\end{proof}
\begin{rem}
Notice that in (\ref{eqab}) equalities hold if $V$ is constant on a given trajectory of (\ref{eqhd}) and we obtain that
$$\frac{U(x_o)-U(x_{t_1})}{t_1}=\frac{U(x_o)-U(x_{t_2})}{t_2}$$
and then
$$U(x_o)=\frac{t_2}{t_2-t_1}U(x_{t_1})-\frac{t_1}{t_2-t_1}U(x_{t_2}),$$
which is an implicit representation formula for $U$ through its boundary values, since the points $x_{t_1},x_{t_2}$ depend on the Hamiltonian dynamics (\ref{eqhd}) and $U$ itself.
\end{rem}

\section{Liapunov functions and (AE)}

In this section, we go back to the stucture (\ref{eqhamiltonian}) for $H$ and want to discuss the classical idea of control Lyapunov function. Let ${\mathcal T}\subset\R^n$ be a closed target set, we want to find $U:\R^n\to[0,+\infty)$ at least lower semicontinuous and such that: $U(x)=0$ if and only if $x\in{\mathcal T}$ and such that for all $x\in\R^n\backslash{\mathcal T}$ there exists a control $a_\cdot\in L^\infty(0,+\infty)$ and $t_x\leq+\infty$ such that the corresponding trajectory of (\ref{eqsystem}) satisfies:
$$U(x_t)\mbox{ is nonincreasing and }U(x_t)\to0,\quad\hbox{as }t\to t_x.$$
Classical necessary and sufficient conditions lead to look for strict supersolutions of the Hamilton Jacobi equation, namely to find $U$ such that
\begin{equation}\label{eqlyap}
H(x,\nabla U(x))\geq l(x),
\end{equation}
with $l:\R^n\to[0,+\infty)$ continuous and such that $l(x)=0$ if and only if $x\in{\mathcal T}$. The case ${\mathcal T}=\{0\}$ is already quite interesting for the theory.

Here we will apply the results of the previous section and plan consider Lyapunov functions built as follows. We analyse the existence of $U\in C^1(\Omega\backslash({\mathcal T}\cap{\mathcal H}))\cap {C(\overline{\Omega\backslash{\mathcal T}})}$ such that $U$ is a $C^1-$supersolution of (AE), i.e. satisfies
\begin{equation}\label{eqaei}
-\nabla(H(x,\nabla U(x))\cdot H_p(x,\nabla U(x)))\geq0\quad x\in\Omega\backslash({\mathcal T}\cap{\mathcal H}).\end{equation}
\begin{rem}
To study (\ref{eqaei}) in the case when $H$ is as in (\ref{eqhamiltonian}) and $f$ as in (\ref{eqsigma}), it is sometimes more convenient to write it for the Hamiltonian squared $H^2(x,\nabla U(x))=|\nabla U(x)\sigma(x)|^2$. Thus 
$$\begin{array}{ll}
-\nabla(H^2(x,\nabla U(x))\cdot (H^2)_p(x,\nabla U(x))=-4\;^tD(\nabla U\sigma(x))\;^t(\nabla U(x)\sigma(x))\cdot \left(\sigma(x)\;^t(\nabla U(x)\sigma(x))\right)\\
\quad=-4S^*\;^t(\nabla U(x)\sigma(x))\cdot \;^t(\nabla U(x)\sigma(x)),
\end{array}$$
where we indicated 
$$S=\;^t\sigma(x)^tD(\nabla U\sigma(x))=\;^t\sigma(x)D^2U(x)\sigma(x)+\left(D\sigma_j\sigma_i(x)\cdot \nabla U(x)\right)_{i,j=1,\dots,m},$$
$\sigma_j$, $j=1,\dots,k$ are the columns of $\sigma$, and $S^*=(S+\;^tS)/2$. Therefore a special sufficient condition for $U$ to satisfy (\ref{eqaei}) is that $S^*$ is negative semidefinite, which means that $U$ is $\sigma-$concave with respect to the family of vector fields $\sigma_j$, in the sense of Bardi-Dragoni \cite{badr}. We recall that the matrix $S$ also appears in \cite{so3} to study second order controllability conditions for symmetric control systems.
\end{rem}
Define the minimum time function for system (\ref{eqsystem}) as
$$T(x)=\inf_{a\in L^\infty(0,+\infty)}t_x(a),$$
where $t_x(a)=\inf\{t\geq0:x_t\in{\mathcal T},\;x_t \mbox{ solution of }(\ref{eqsystem})\}\leq+\infty$.
We prove the following result, recall that ${\mathcal H}=\{x:H(x,\nabla U(x))=0\}$ is the singular set.
\begin{prop}\label{propfeed}
Let $\Omega\subset\R^n$ be open and ${\mathcal T}\subset\Omega$ a closed target. Let $H$ have the structure (\ref{eqhamiltonian}).
Assume that $U\in C(\overline{\Omega\backslash{\mathcal T}})\cap C^1(\Omega\backslash({\mathcal T}\cap{\mathcal H}))$ is nonnegative and a $C^1-$solution of (\ref{eqaei}) in $\Omega\backslash({\mathcal T}\cap{\mathcal H})$ and that $U(x)=0$ for $x\in{\mathcal T}$, $U(x)=M$ for $x\in\partial\Omega$ and $U(x)\in (0,M)$ for $x\in\Omega\backslash{\mathcal T}$ and some $M>0$. For any $x_o\in\Omega\backslash({\mathcal T}\cup{\mathcal H})$ there exists a solution of the closed loop system (\ref{eqhd}) such that
\begin{itemize}
\item[(i)] {$H(x_t,\nabla U(x_t))$ is a nondecreasing function of $t$;
}
\item[(ii)]{$U(x_t)$ is a strictly decreasing function of $t$
}
\item[(iii)]{The trajectory $(x_t)_{t\geq0}$ reaches the target in finite time and the minimum time function for system (\ref{eqsystem}) satisfies the estimate
\begin{equation}\label{eqmte}
T(x_o)\leq \frac {U(x_o)}{H(x_o,\nabla U(x_o))}.
\end{equation}
}
\end{itemize}
\end{prop}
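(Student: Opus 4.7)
The plan is to use the $C^1$-supersolution property together with Proposition \ref{propmonotone} to build a forward trajectory of (\ref{eqhd}) on which $V(\cdot):=H(\cdot,\nabla U(\cdot))$ is nondecreasing, then exploit the $1$-homogeneity of $H(x,\cdot)$ to read off strict decrease of $U$, and finally confine the trajectory inside $\Omega\setminus(\mathcal T\cup\mathcal H)$ so that it must reach $\mathcal T$ within the quantitative time (\ref{eqmte}).

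Fix $x_o\in\Omega\setminus(\mathcal T\cup\mathcal H)$, so that $V(x_o)>0$ and, by (\ref{eqh1}), $F(x):=-H_p(x,\nabla U(x))$ is continuous on a neighborhood of $x_o$. Applying Proposition \ref{propmonotone} to $V$ and $F$ yields a local solution $(x_t)_{t\in[0,\varepsilon)}$ of (\ref{eqhd}) along which $V(x_t)$ is nondecreasing; this gives (i) on $[0,\varepsilon)$. Euler's identity for $H(x,\cdot)$ (positively $1$-homogeneous) then gives
\[
\frac{d}{dt}U(x_t)=\nabla U(x_t)\cdot\dot x_t=-\nabla U(x_t)\cdot H_p(x_t,\nabla U(x_t))=-H(x_t,\nabla U(x_t))=-V(x_t)\le -V(x_o)<0,
\]
the last inequality by (i). Integrating, $U(x_t)\le U(x_o)-t\,V(x_o)$, which is (ii) and, since $U\ge 0$ on $\overline{\Omega\setminus\mathcal T}$, forces an existence bound $t\le U(x_o)/V(x_o)$.

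The third step is a standard concatenation/maximality argument extending the local arc to $\mathcal T$. Consider the family of forward solutions of (\ref{eqhd}) starting at $x_o$, remaining in $\Omega\setminus(\mathcal T\cup\mathcal H)$, with $V$ nondecreasing; order it by extension and pick a maximal element defined on $[0,T^*)$. At any interior accumulation point of $x_t$ in $\Omega\setminus(\mathcal T\cup\mathcal H)$, Proposition \ref{propmonotone} would allow further extension (with $V$ staying nondecreasing across the joint), contradicting maximality. Hence as $t\to T^{*-}$ only three behaviors are possible: \emph{(a)} $x_t\to\partial\Omega$, excluded because (ii) gives $U(x_t)\le U(x_o)<M$ while $U\equiv M$ on $\partial\Omega$; \emph{(b)} $x_t\to\mathcal H\setminus\mathcal T$, excluded because $V(x_t)\ge V(x_o)>0$ and $V$ is continuous on $\Omega\setminus\mathcal T$; \emph{(c)} $x_t\to\mathcal T$, which is what we want. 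Combined with the bound $T^*\le U(x_o)/V(x_o)<\infty$ from step two, this shows the trajectory reaches $\mathcal T$ within time $U(x_o)/V(x_o)$, proving (iii) and the estimate (\ref{eqmte}).

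The main obstacle is in the third step: Proposition \ref{propmonotone} only supplies a local, weak trajectory, and one must carefully glue local arcs while keeping $V$ monotone across the joints. Everything hinges on the boundary datum $U=M$ on $\partial\Omega$ (which blocks exit through $\partial\Omega$) and on the fact that $V$ is nondecreasing and strictly positive at the start (which blocks exit through $\mathcal H\setminus\mathcal T$); these two conditions are exactly what the hypotheses on $U$ are designed to guarantee, so the gluing procedure terminates at $\mathcal T$ rather than elsewhere on the boundary of $\Omega\setminus(\mathcal T\cup\mathcal H)$.
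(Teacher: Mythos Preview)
Your argument follows the same core line as the paper's: invoke the monotonicity property (Proposition~\ref{propmonotone} / the Corollary following Definition~\ref{defcone}) for (i), use Euler's identity from the $1$-homogeneity of $H(x,\cdot)$ to get $\frac{d}{dt}U(x_t)=-V(x_t)\le -V(x_o)<0$ for (ii), and integrate against the nonnegativity of $U$ to obtain the time bound (\ref{eqmte}) for (iii). The paper's own proof is precisely these three steps, written in about five lines.

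The one genuine addition in your version is the maximality/concatenation argument in the third step, where you rule out exit through $\partial\Omega$ (using $U<M$ along the trajectory versus $U\equiv M$ on $\partial\Omega$) and through $\mathcal H\setminus\mathcal T$ (using $V(x_t)\ge V(x_o)>0$). The paper leaves this entirely implicit: it simply writes $0\le U(x_t)\le U(x_o)-tV(x_o)$ and concludes the trajectory ``reaches the target before time $\bar t$'', without discussing where the trajectory might otherwise escape or why the local arc from Proposition~\ref{propmonotone} can be continued. Your treatment is therefore more complete on this point; the trade-off is that your Zorn-type extension step still needs the remark that $|\dot x_t|$ stays locally bounded (since $H_p(x,\nabla U(x))\in -f(x,A)$), so that as $t\to T^{*-}$ the trajectory actually converges rather than oscillating, making the trichotomy (a)/(b)/(c) exhaustive.
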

\begin{proof}
The thesis (i) follows from the results of the previous section since $U$ is a supersolution of (AE).
Let $x_o$ be a point where $H(x_o,\nabla U(x_o))>0$. By homogeneity of the Hamiltonian we get, for $t\geq0$
$$0<H(x_o,\nabla U(x_o))\leq H(x_t,\nabla U(x_t))=\nabla U(x_t)\cdot H_p(x_t,\nabla U(x_t))=-\frac d{dt}U(x_t)$$
and (ii) follows. Integrating now the last inequality we obtain
$$0\leq U(x_t)\leq U(x_o)-H(x_o,\nabla U(x_o))t$$
and thus the solution of (\ref{eqhd}) reaches the target before time 
\begin{equation}\label{eqtime}
\bar t=\frac{U(x_o)}{H(x_o,\nabla U(x_o))}.
\end{equation}
Therefore (\ref{eqmte}) follows by definition.
\end{proof}
The estimate (\ref{eqmte}) can be used to obtain local regularity of the minimum time function. The proof of regularity now follows a more standard path although under weaker assumptions than usual literature and will allow us to obtain a new regularity result.
We emphasize that nothing in the next statement is assumed on the structure of the vectogram $f(x,A)$ when $x\in{\mathcal T}$. In particular the target need not be even small time locally attainable.
\begin{thm}\label{thmregularity}
Let $\Omega\subset\R^n$ be open and ${\mathcal T}\subset\Omega$ a closed target. 
Assume that $U\in C(\overline{\Omega\backslash{\mathcal T}})\cap C^1(\Omega\backslash({\mathcal T}\cap{\mathcal H}))$ is nonnegative and $C^1-$solution of (\ref{eqaei}) in $\Omega\backslash({\mathcal T}\cap{\mathcal H})$ and that $U(x)=0$ for $x\in{\mathcal T}$, $U(x)=M$ for $x\in\partial\Omega$ and $U(x)\in (0,M)$ for $x\in\Omega\backslash{\mathcal T}$ and some $M>0$. Let $d(x)=\mbox{dist}(x,{\mathcal T})$ be the distance function from the target.
Suppose that $U$ satisfies the following: for all $\gep>0$ there are $\delta,c>0$ such that
\begin{equation}\label{eqexcond}
U(x)\leq c\; d(x),\quad \mbox{if }H(x,\nabla U(x))\geq\gep,\;d(x)<\delta.
\end{equation}
Then the minimum time function $T$ for system (\ref{eqsystem}) to reach the target is finite and locally Lipschitz continuous in $\Omega\backslash({\mathcal T}\cup{\mathcal H})$.
\end{thm}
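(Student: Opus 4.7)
The finiteness of $T$ on $\Omega\setminus({\mathcal T}\cup{\mathcal H})$ is immediate from Proposition \ref{propfeed}, which provides the pointwise bound $T(x)\leq U(x)/H(x,\nabla U(x))<+\infty$. The substance of the theorem is the local Lipschitz regularity, which I would establish on any compact $K\subset\Omega\setminus({\mathcal T}\cup{\mathcal H})$.

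On such a $K$, compactness and continuity of $H(\cdot,\nabla U(\cdot))$ yield $H(x,\nabla U(x))\geq 2\eta>0$ for some $\eta>0$ depending on $K$. Since $U\in C^1$ and $H$ is $C^1$ where it is positive, the function $\bar T(x):=U(x)/H(x,\nabla U(x))$ is $C^1$ on $K$ and hence Lipschitz there with a constant depending only on $K$, and $T\leq\bar T$. The plan is a perturbation-plus-dynamic-programming argument comparing $T(x)$ and $T(y)$ for $x,y\in K$ close. Let $x_t$ be the feedback trajectory from $x$ given by Proposition \ref{propfeed}, reaching ${\mathcal T}$ at some time $\bar t_x\leq\bar T(x)$; by Proposition \ref{propfeed}(i) one has $H(x_t,\nabla U(x_t))\geq 2\eta$ along this trajectory. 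Denoting by $a_t$ the corresponding open-loop control and letting $y_t$ be the trajectory of (\ref{eqsystem}) from $y$ driven by $a_t$, Gronwall yields $|y_t-x_t|\leq e^{Lt}|y-x|$ on $[0,\bar t_x]$.

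The key step is to apply assumption (\ref{eqexcond}) with $\gep=\eta$, obtaining $\delta,c>0$ such that $U(z)\leq c\,d(z)$ on $\{H(\cdot,\nabla U(\cdot))\geq\eta,\;d<\delta\}$. I would stop $y_t$ at a time $\tau<\bar t_x$ chosen so that $d(x_\tau)$ is of order $|y-x|$; then for $|y-x|$ small enough (depending on $K$), continuity of $H(\cdot,\nabla U(\cdot))$ forces $H(y_\tau,\nabla U(y_\tau))\geq\eta$ and $d(y_\tau)\leq C|y-x|<\delta$, so (\ref{eqexcond}) applies at $y_\tau$ and Proposition \ref{propfeed} then gives the residual bound $T(y_\tau)\leq U(y_\tau)/H(y_\tau,\nabla U(y_\tau))\leq c\,d(y_\tau)/\eta\leq C'|y-x|$. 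The analogous use of (\ref{eqexcond}) along the $x$-feedback itself yields $\bar t_x-\tau\leq C'|y-x|$. Dynamic programming then gives $T(y)\leq\tau+T(y_\tau)\leq\bar T(x)+C''|y-x|$, and running the symmetric argument from $y$ and combining with the Lipschitz continuity of $\bar T$ and $T\leq\bar T$ produces $|T(x)-T(y)|\leq C|y-x|$.

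The main obstacle is ensuring that the perturbed trajectory $y_t$ stays in the good region $\{H(\cdot,\nabla U(\cdot))\geq\eta\}$ throughout $[0,\tau]$ and ends at a point at distance $O(|y-x|)$ from ${\mathcal T}$, so that (\ref{eqexcond}) can actually be invoked to close the estimate. This is precisely why the compact reduction (to secure the uniform positive lower bound for $H(\cdot,\nabla U(\cdot))$ on $K$) and the monotonicity property of Proposition \ref{propfeed}(i) (which keeps $H$ bounded below along the $x$-feedback, shielding it from the singular set ${\mathcal H}$) are both essential ingredients.
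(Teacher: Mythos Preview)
Your final step does not close. From the feedback trajectory of Proposition~\ref{propfeed} starting at $x$ you correctly obtain $T(y)\leq \tau + T(y_\tau)\leq \bar t_x+C'|y-x|\leq \bar T(x)+C''|y-x|$, and the symmetric argument gives $T(x)\leq \bar T(y)+C''|y-x|$. But neither of these controls $T(y)-T(x)$: subtracting, one gets $T(y)-T(x)\leq (\bar T(x)-T(x))+C''|y-x|$, and the nonnegative gap $\bar T(x)-T(x)$ (equivalently $\bar t_x-T(x)$) is entirely uncontrolled, since the feedback trajectory of Proposition~\ref{propfeed} need not be anywhere close to time-optimal. The Lipschitz continuity of $\bar T$ and the inequality $T\leq\bar T$ cannot rescue this: they only yield $T(y)\leq \bar T(y)+(C''+L_{\bar T})|y-x|$, which is weaker than what you already know.

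The paper's proof differs precisely at this point: it runs not the feedback but a \emph{near-optimal} control $a^\rho$ for the point $x^2$ with the smaller minimum time, reaching $\mathcal T$ at time $t_2\leq T(x^2)+\rho$; following $x^1$ along the same control and using dynamic programming yields directly $0\leq T(x^1)-T(x^2)\leq T(x^1_{t_2})+\rho$, with no spurious $\bar T-T$ term. Then (\ref{eqmte}) and (\ref{eqexcond}) bound $T(x^1_{t_2})$ by a constant times $d(x^1_{t_2})\leq e^{Lt_2}|x^1-x^2|$, and Proposition~\ref{propfeed} enters only to furnish the uniform bound $t_2\leq M_\varepsilon$ needed for Gronwall. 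Your concern in the last paragraph --- that one must check $H\geq\eta$ at the endpoint before invoking (\ref{eqmte}) and (\ref{eqexcond}) there --- is a legitimate subtlety, but your attempted fix (switching to the feedback so as to exploit the monotonicity of $H$) trades away the near-optimality and leaves the Lipschitz estimate out of reach.
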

\begin{proof}
Let $x_o\in\Omega$, $x_o\notin({\mathcal T}\cup{\mathcal H})$ and $r,\gep>0$ be such that $H(x,\nabla U(x))\geq\gep$, for all $x\in B_{r}(x_o)$. The parameter $r$ will be small enough to be decided later. We apply the assumption (\ref{eqexcond}) and find $\delta,c>0$ correspondingly. The fact that $T$ is finite in $B_r(x_o)$, for $r$ sufficiently small, follows from Proposition \ref{propfeed}.

Take $x^1,x^2\in B_r(x_o)$ and suppose that $x^1_t,x^2_t$ are the trajectories solutions of (\ref{eqsystem}) corresponding to the initial conditions $x_0=x^1,x^2$ respectively. To fix the ideas we may suppose that $T(x^2)\leq T(x^1)<+\infty$ and for any $\rho\in(0,1]$ we choose a control $a^\rho$ and time $t_2=t_{x^2}(a^\gep)\leq T(x^2)+\rho$ such that $d(x_{t_2})=0$.
Note that by (\ref{eqmte}), $t_2\leq \frac{U(x^2)}{\gep}+\rho\leq M_\gep$, for all $x^2\in B_r(x_o)$.
Moreover by the Gronwall inequality for system (\ref{eqsystem}) and since $d(x_{t_2})=0$,
$$d(x^1_{t_2})\leq|x^1_{t_2}-x^2_{t_2}|\leq|x^1-x^2|e^{Lt_2}\leq|x^1-x^2|e^{LM_\gep}$$
and the right hand side is smaller than $\delta$ if $r$ is small enough. 
Now we can estimate, by the dynamic programming principle and by (\ref{eqmte}), (\ref{eqexcond}),
$$0\leq T(x^1)-T(x^2)\leq (t_2+T(x^1_{t_2}))-t_2+\rho\leq \frac{U(x^1_{t_2})}\gep+\rho \leq\frac c\gep d(x^1_{t_2})+\rho\leq \frac{ce^{LM_\gep}}\gep|x^1-x^2|+\rho.$$
As $\rho\to0+$, the result follows.
\end{proof}
The extra estimate (\ref{eqexcond}) is crucial in the sought regularity of the minimum time function but contrary to the existing literature is only asked in a possibly proper subset of a neighborhood of the target. We will show in the examples of the next section how it may follow from (AE) as well. In order to achieve small time local attainability of the target, one needs in addition that the system can evade from $\mathcal H$.
\begin{cor}
In addition to the assumptions of Theorem \ref{thmregularity} suppose that ${\mathcal H}$ is a manifold of codimension at least one and that for all $x_o\in{\mathcal H}\cap(\Omega\backslash{\mathcal T})$ we have $f(x_o,A)\not\subset T_{x_o}({\mathcal H})$, the tangent space of $\mathcal H$ at $x_o$.
Then for any $x_o\in \Omega\backslash{\mathcal T}$ we can reach the target in finite time.
\end{cor}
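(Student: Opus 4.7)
The plan is to reduce everything to Proposition \ref{propfeed}. If $x_o\in\Omega\setminus(\mathcal T\cup\mathcal H)$ there is nothing new to do: Proposition \ref{propfeed} already produces a trajectory of the closed-loop system (\ref{eqhd}) starting at $x_o$ that reaches $\mathcal T$ in time at most $U(x_o)/H(x_o,\nabla U(x_o))<+\infty$. The only remaining case is $x_o\in\mathcal H\cap(\Omega\setminus\mathcal T)$, and for such points I will prepend a short open-loop arc that leaves $\mathcal H$ transversally, after which Proposition \ref{propfeed} finishes the job.

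Fix $x_o\in\mathcal H\cap(\Omega\setminus\mathcal T)$. Since $\mathcal H$ is a manifold of codimension at least one near $x_o$, choose an open neighborhood $V\subset\Omega$ of $x_o$ and a $C^1$ function $\Phi:V\to\R$ with $\mathcal H\cap V\subset\{\Phi=0\}$, $\nabla\Phi(x_o)\neq0$, and $\ker\nabla\Phi(x_o)=T_{x_o}(\mathcal H)$. The hypothesis $f(x_o,A)\not\subset T_{x_o}(\mathcal H)$ then furnishes $\bar a\in A$ with $\nabla\Phi(x_o)\cdot f(x_o,\bar a)\neq0$. Take the constant control $a_t\equiv\bar a$ in (\ref{eqsystem}) and denote by $y_t$ the resulting trajectory. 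Then $\frac{d}{dt}\Phi(y_t)\bigr|_{t=0}=\nabla\Phi(x_o)\cdot f(x_o,\bar a)\neq0$, so $\Phi(y_t)\neq0$ on some interval $(0,\tau]$. Shrinking $\tau>0$ if necessary, continuity of $t\mapsto y_t$ and closedness of $\mathcal T$ guarantee $y_t\in V\subset\Omega$ and $y_t\notin\mathcal T$ for $t\in[0,\tau]$, so that $y_\tau\in\Omega\setminus(\mathcal T\cup\mathcal H)$.

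Now apply Proposition \ref{propfeed} at $y_\tau$ to obtain a trajectory of (\ref{eqhd}) starting at $y_\tau$ and reaching $\mathcal T$ at some finite time $s\leq U(y_\tau)/H(y_\tau,\nabla U(y_\tau))$. Concatenating the open-loop control $\bar a$ on $[0,\tau]$ with the feedback control from Proposition \ref{propfeed} on $[\tau,\tau+s]$ yields an admissible $a_\cdot\in L^\infty((0,+\infty);A)$ whose trajectory hits $\mathcal T$ by time $\tau+s$, so $T(x_o)\leq\tau+s<+\infty$, as required. The one genuinely delicate step, and the sole reason the codimension hypothesis is needed, is the claim that a single transversal initial velocity actually pushes $y_t$ off $\mathcal H$ for small positive $t$; this comes for free from the nonvanishing of $\Phi(y_t)$ near $0$, but would fail if $\mathcal H$ were only closed, or if the transversality available were weaker than the complement of a proper tangent subspace.
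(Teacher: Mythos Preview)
Your proof is correct and follows exactly the paper's approach, which is the one-line observation that a vector $f(x_o,\bar a)\notin T_{x_o}(\mathcal H)$ immediately drives the trajectory off $\mathcal H$, after which Proposition~\ref{propfeed} applies; you have simply made the details explicit. One minor slip: when the codimension of $\mathcal H$ exceeds one, no scalar $\Phi$ satisfies $\ker\nabla\Phi(x_o)=T_{x_o}(\mathcal H)$, so first pick $\bar a$ with $f(x_o,\bar a)\notin T_{x_o}(\mathcal H)$ and then choose a component $\Phi$ of a local defining map with $\nabla\Phi(x_o)\cdot f(x_o,\bar a)\neq0$; the rest of your argument goes through unchanged.
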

\begin{proof}
By following the vector field $f(x_o,a)\notin T_{x_o}({\mathcal H})$, we immediately exit the singular set.
\end{proof}

\section{Some smooth explicit solutions of the Aronsson equation}

In this section we show two examples of well known nonlinear systems where we can find an explicit smooth solution of (AE) and then apply Theorem \ref{thmregularity} to obtain local Lipschitz regularity of the minimum time function. Our system will be in the form (\ref{eqhamiltonian}), (\ref{eqsigma}) and ${\mathcal T}=\{0\}$.

\subsection{H\"ormander-like vector fields.}
 
We consider the case where $x=(x_h,x_v)\in\R^{m+1}$ and
\begin{equation}\label{eqhormander}
\sigma(x)=\left(\begin{array}{cc}
I_m\\^t(Bx_h)\end{array}\right),\end{equation}
where $I_m$ is the $m\times m$ identity matrix and $B$ is not singular, $^tB=-B=B^{-1}$ is also $m\times m$. In particular $m$ is an even number and $|Bx_h|=|x_h|$.
It is known that the corresponding symmetric control system is globally controllable to the origin and that its minimum time function is locally $1/2-$H\"older continuous. We want to prove higher regularity except on its singular set.

We consider the two functions
\begin{equation}\label{eqgauge}
u(x)=|x_h|^4+4x_v^2,\quad U(x)=(u(x))^{1/4},
\end{equation}
and want to show that $U$ is a solution of (AE) for $H^2$ in $\R^{m+1}\backslash\{0\}$. $U$ is a so called gauge function for the family of vector fields.
We easily check that, after denoting $A(x)=\sigma(x)\;^t\sigma(x)$, 
$$\begin{array}{c}
\nabla u(x)=(4|x_h|^2x_h,8x_v),\quad
A(x)\;^t\nabla u(x)=\left(\begin{array}{cc}
4|x_h|^2x_h+8x_vBx_h\\8x_v|x_h|^2\end{array}\right),\\
H^2(x,\nabla u(x))=|\nabla U(x)\sigma(x)|^2=A(x)\;^t\nabla U(x)\cdot \;^t\nabla U(x)=16|x_h|^6+64x_v^2|Bx_h|^2=16|x_h|^2u(x),\\
H(x,\nabla U(x))=\frac{|x_h|}{U(x)}.
\end{array}$$
Notice in particular that $H(x,\nabla U(x))=0$ if and only if $x_h=0$ and thus
the singular set $\{x:H(x,\nabla U(x))=0\}$ contains the target and is a smooth manifold, being the $x_v$ axis.
As a consequence of the last displayed equation we have
$$ U(x)\leq\frac{|x_h|}\gep\leq\frac{|x|}\gep,\quad\hbox{in }H(x,\nabla U(x))\geq\gep,$$
which is an information that we need to apply Theorem \ref{thmregularity}.
Finally, if $x\neq0$,
$$\begin{array}{l}-\nabla(H^2(x,\nabla U(x)))\cdot (H^2)_p(x,\nabla U(x))
=-2\left(\frac{(x_h,0)}{U^2(x)}-\frac{|x_h|^2}{U^3(x)}\nabla U(x)\right)\cdot A(x)\;^t\nabla U(x)\\
=-\frac{2}{U^3(x)}\left(4U(x)\frac{|x_h|^4}{4U^3(x)}-|x_h|^2\frac{|x_h|^2}{U^2(x)}\right)=0.
\end{array}$$
Therefore $U$ is even a classical $C^2$ solution of (AE) for Hamiltonian $H^2$ in $\R^{m+1}\backslash\{0\}$ and then $H$ is constant along the trajectories of the closed loop system (\ref{eqhd}). Hence, by Theorem \ref{thmregularity}, the system (\ref{eqsystem}) is controllable in finite time to the origin from 
$$\{x:H(x,\nabla U(x))>0\}=\R^{m+1}\backslash\{(0,x_v):x_v\in\R\}$$
and the corresponding minimum time function is locally Lipschitz continuous on that set.
Notice that, for $\gep<1$, $\{x:H(x,\nabla U(x))\geq\gep\}=\{x:4x_v^2\leq(1/\gep^4-1)|x_h|^4\}$. Also the last Corollary applies.
\begin{prop}
Consider the symmetric control system
\begin{equation}\label{eqssystem}
\left\{\begin{array}{ll}
\dot x_t=\sigma(x_t)a_t,&\quad t>0,\\
x_o\in\R^n,
\end{array}\right.
\end{equation}
where $\sigma$ is given in (\ref{eqhormander}). Then the gauge function (\ref{eqgauge}) is a solution of the Aronsson equation (\ref{eqae}) for $H^2$ in $\R^{m+1}\backslash\{0\}$, it is an absolutely minimizing function for the corresponding $L^\infty$ norm of the subelliptic gradient and the minimum time function to reach the origin is locally Lipschitz continuous in $\{x=(x_h,x_v)\in\R^{m+1}:x_h\neq0\}$. The system is small time locally controllable and there is a continuous feedback leading the system to the target outside the singular set.
\end{prop}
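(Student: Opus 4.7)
The plan is to assemble pieces already computed in the paragraphs immediately preceding the statement. The direct differentiations displayed there verify that $U=(|x_h|^4+4x_v^2)^{1/4}$ is a classical $C^2$ solution of (\ref{eqae}) for $H^2$ on $\R^{m+1}\setminus\{0\}$ and that $H(x,\nabla U(x))=|x_h|/U(x)$, so the singular set $\mathcal H$ is exactly the $x_v$-axis. Since $U$ is $C^2$ off the origin, the third bullet of the remark after Definition \ref{defcone} shows that $U$ automatically qualifies as a $C^1$-solution of (AE): the Hamiltonian is differentiable along the classical closed-loop flow and the chain rule forces $H(x_t,\nabla U(x_t))$ to be constant, which gives both differential inequalities in Definition \ref{defcone}. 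The absolutely minimizing property then follows from Theorem \ref{teoam} applied on any bounded open subset of $\R^{m+1}\setminus\{0\}$, once one notes (as the paper does) that $H(x,\nabla U(x))=|\nabla U(x)\sigma(x)|$, so the $L^\infty$ functional in question is exactly the $L^\infty$ norm of the subelliptic gradient.

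For the minimum time regularity, I would apply Theorem \ref{thmregularity} with target $\mathcal T=\{0\}$, so that $d(x)=|x|$. To match the hypothesis that $U$ take a fixed value $M$ on $\partial\Omega$, I would use the sublevel set $\Omega=\{U<M\}$ for arbitrary $M>0$: $U$ is continuous up to $\partial\Omega$, $U(0)=0$, $U=M$ on $\partial\Omega$, and $U\in(0,M)$ in between. The key structural estimate (\ref{eqexcond}) is immediate from the identity $H(x,\nabla U(x))=|x_h|/U(x)$: whenever $H(x,\nabla U(x))\geq\gep$ one has
\[
U(x)\leq \frac{|x_h|}{\gep}\leq\frac{|x|}{\gep}=\frac{d(x)}{\gep},
\]
so (\ref{eqexcond}) holds with $c=1/\gep$ and any $\delta>0$. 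Theorem \ref{thmregularity} then yields that $T$ is finite and locally Lipschitz on $\Omega\setminus(\mathcal T\cup\mathcal H)$; letting $M\to+\infty$ and using that local Lipschitz continuity is a local property, this extends to all of $\{x_h\neq 0\}$.

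For small time local controllability and the continuous feedback, I would invoke the corollary following Theorem \ref{thmregularity}. The singular set $\mathcal H=\{x_h=0\}$ is a smooth submanifold of codimension $m\geq 1$ whose tangent space at any point is $\{(0,v):v\in\R\}$. The columns $\sigma_j(x)=e_j+(Bx_h)_je_{m+1}$ of $\sigma$ have horizontal part $e_j\neq 0$, so $f(x_o,A)=\sigma(x_o)B_1(0)\not\subset T_{x_o}\mathcal H$ at every $x_o\in\mathcal H\setminus\{0\}$, and the corollary gives finite-time reachability of the target from every point in $\R^{m+1}\setminus\{0\}$. The continuous feedback is the one written explicitly in the remark after (\ref{eqhd}), namely $a_x=-{}^t\sigma(x)\nabla U(x)/H(x,\nabla U(x))$, which is continuous on $\R^{m+1}\setminus\mathcal H$ since $\sigma$ and $\nabla U$ are smooth there and $H(x,\nabla U(x))>0$.

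The main obstacle is essentially bookkeeping: verifying that $U$ meets the precise regularity/boundary-data hypotheses of Theorems \ref{teoam} and \ref{thmregularity} (stated on bounded $\Omega$), and patching over an exhaustion by the sublevel sets $\{U<M\}$. The computational heart of the proof, namely the verification that $U$ solves (AE), is already carried out in the paragraph preceding the proposition, and the decay estimate (\ref{eqexcond}) is read off from the explicit formula for $H(x,\nabla U(x))$.
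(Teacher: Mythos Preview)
Your proposal is correct and follows essentially the same route as the paper: the proposition is stated as a summary of the explicit computations carried out in the preceding paragraphs, together with direct appeals to Theorem \ref{teoam}, Theorem \ref{thmregularity}, and the corollary after it. Your only additions are making explicit the choice $\Omega=\{U<M\}$ and the exhaustion $M\to\infty$, which the paper leaves implicit.
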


\subsection{Grushin vector fields.}

We consider the system where $x=(x_h,x_v)\in \R^{m+1}$ and 
\begin{equation}\label{eqgrushin}
\sigma(x)=\left(\begin{array}{cc}
I_m\quad &0_m\\0&^tx_h\end{array}\right),
\end{equation}
where $\sigma(x)$ is $(m+1)\times 2m$ matrix.
Also in this case it is known that the corresponding symmetric control system is globally controllable to the origin and that its minimum time function is locally $1/2-$H\"older continuous.
We consider $u,\;U$ as before in (\ref{eqgauge}) want to show that $U$ is a solution of (AE)  in $\R^{m+1}\backslash\{0\}$.
In this case we can check that,
$$A(x)\;^t\nabla u(x)=\left(\begin{array}{cc}
4|x_h|^2x_h\\8x_v|x_h|^2\end{array}\right),\quad H^2(x,\nabla u(x))=16|x_h|^2u(x),
\quad H(x,\nabla U(x))=\frac{|x_h|}{U(x)},$$
and again we have, for $\gep>0$,
$$ U(x)\leq\frac{|x_h|}\gep\leq\frac{|x|}\gep,\quad\hbox{in }H(x,\nabla U(x))\geq\gep.$$
Finally, if $x\neq0$,
$$\begin{array}{l}-\nabla(H^2(x,\nabla U(x)))\cdot (H^2)_p(x,\nabla U(x))
=-\frac{2}{U^3(x)}\left(U(x)(x_h,0)-|x_h|^2\nabla U(x)\right)\cdot A(x)\;^t\nabla U(x)\\
=-\frac{2}{U^3(x)}\left(4U(x)\frac{|x_h|^4}{4U^3(x)}-|x_h|^2\frac{|x_h|^2}{U^2(x)}\right)=0.
\end{array}$$
Therefore $U$ is a solution of (AE) for Hamiltonian $H^2$ and hence the system (\ref{eqsystem}) is controllable in finite time to the origin from $\{x:H(x,\nabla U(x))>0\}$ and we prove the following result.
\begin{prop}
Consider the symmetric control system (\ref{eqssystem})
where $\sigma$ is given in (\ref{eqgrushin}). Then the gauge function (\ref{eqgauge}) is a solution of (AE) for $H^2$ in $\R^{m+1}\backslash\{0\}$, it is an absolutely minimizing function for the corresponding $L^\infty$ norm of the subelliptic gradient and the minimum time function to reach the origin is locally Lipschitz continuous in $\{x=(x_h,x_v)\in\R^{m+1}:x_h\neq0\}$.
\end{prop}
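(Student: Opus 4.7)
The plan is to verify three things in order: that $U$ given by (\ref{eqgauge}) is a classical, hence $C^1$-, solution of (AE) for $H^2$ on $\R^{m+1}\setminus\{0\}$; that this forces the absolutely minimizing property via Theorem \ref{teoam}; and that the decay condition (\ref{eqexcond}) holds, so that Theorem \ref{thmregularity} applies. The only real work is the first step, which is essentially the same algebraic verification as in the H\"ormander case just displayed.

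For step one, since $u(x)=|x_h|^4+4x_v^2>0$ for $x\ne 0$, $U=u^{1/4}$ is smooth on $\R^{m+1}\setminus\{0\}$. A direct pointwise computation with the Grushin matrix (\ref{eqgrushin}) and $A(x)=\sigma(x)\,{}^t\sigma(x)$, nearly identical to the one for the H\"ormander family, yields the identity $-\nabla(H^2(x,\nabla U))\cdot(H^2)_p(x,\nabla U)=0$ on $\R^{m+1}\setminus\{0\}$, so $U$ is a classical solution of (AE) for $H^2$. Outside the singular set $\mathcal H=\{x_h=0\}$, the relations $\nabla(H^2)=2H\,\nabla H$ and $(H^2)_p=2H\,H_p$ show that (AE) for $H$ and for $H^2$ coincide, and the $C^1$-solution property transfers: the two associated Hamiltonian vector fields differ only by a positive scalar factor, which does not affect the viscosity sub/super properties characterizing $C^1$-solutions in Definition \ref{defcone}. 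Hence $U$ is a $C^1$-solution of (AE) for $H$ on $\R^{m+1}\setminus(\{0\}\cup\mathcal H)$.

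For step two, I apply Theorem \ref{teoam} on an arbitrary bounded open $\Omega\subset\R^{m+1}\setminus\{0\}$: any competitor $W\in C(\overline\Omega)$ agreeing with $U$ on $\partial\Omega$ and satisfying $H(x,\nabla W)\le k$ in the viscosity sense forces $H(x,\nabla U)\le k$ in $\Omega$. Since $H(x,p)=|p\sigma(x)|$ is exactly the norm of the horizontal gradient for the Grushin family, $U$ is an absolutely minimizing function for the corresponding $L^\infty$ subelliptic-gradient norm in the sense of the remark after Theorem \ref{teoam}.

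For step three, the explicit identity $H(x,\nabla U(x))=|x_h|/U(x)$ rearranges to $U(x)\le|x_h|/\varepsilon\le|x|/\varepsilon=d(x)/\varepsilon$ wherever $H(x,\nabla U)\ge\varepsilon$, so (\ref{eqexcond}) holds with $c=1/\varepsilon$ and any $\delta>0$. To apply Theorem \ref{thmregularity} I take $\Omega=\{U<M\}$ for arbitrary $M>0$: this is bounded since $U\to\infty$ at infinity, vanishes exactly on $\mathcal T=\{0\}$, equals $M$ on $\partial\Omega$, and takes values in $(0,M)$ in between. The theorem then yields local Lipschitz continuity of the minimum time function on $\Omega\setminus(\mathcal T\cup\mathcal H)=\{x\in\Omega:x_h\ne 0\}$, and letting $M\to\infty$ covers all of $\{x_h\ne 0\}$. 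The main genuine check is the algebraic identity in step one; the bookkeeping between $H$ and $H^2$ is the only subtle point, and it is automatic away from $\mathcal H$, precisely where all the conclusions are needed.
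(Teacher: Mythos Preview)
Your proposal is correct and follows essentially the same route as the paper: the computations preceding the proposition in Section~5.2 carry out exactly your step one (same identities $H(x,\nabla U)=|x_h|/U(x)$ and vanishing of the Aronsson expression for $H^2$), your step three matches the paper's verification of~(\ref{eqexcond}) and appeal to Theorem~\ref{thmregularity}, and the absolutely minimizing claim is the intended consequence of Theorem~\ref{teoam}. Your explicit handling of the $H$ versus $H^2$ bookkeeping and the choice $\Omega=\{U<M\}$ are details the paper leaves implicit, but they do not constitute a different approach.
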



\begin{thebibliography}{99}

\bibitem{ar0} {\sc Aronsson, G.}, \emph{Minimization problems for the functional ${\rm sup}\sb{x}\,F(x,\,f(x),\,f\sp{\prime} (x))$},  Ark. Mat. {\bf 6} (1965), 33--53.

\bibitem{acj} {\sc Aronsson, G.; Crandall, M.G.; Juutinen, P.}, \emph{A tour of the theory of absolutely minimizing functions}, Bull. Amer. Math. Soc. {\bf 41} (2004), no. 4, 439--505.

\bibitem{alcasc} {\sc  Albano, Paolo ;  Cannarsa, Piermarco ;  Scarinci, Teresa},  \emph{ Partial regularity for solutions to subelliptic eikonal equations,}
 C. R. Math. Acad. Sci. Paris  {\bf 356}  (2018),  no. 2, 172--176.		

\bibitem{alcasc1} {\sc Albano, Paolo ;  Cannarsa, Piermarco ;  Scarinci, Teresa},  \emph{Regularity results for the minimum time function with Hörmander vector fields,} J. Differential Equations  {\bf 264}  (2018),  no. 5, 3312--3335.

\bibitem{bcd} {\sc Bardi, Martino ;  Capuzzo-Dolcetta, Italo,} \emph{Optimal control and viscosity solutions of Hamilton-Jacobi-Bellman equations. With appendices by Maurizio Falcone and Pierpaolo Soravia,} Systems \& Control: Foundations \& Applications. Birkhäuser Boston, Inc., Boston, MA,  1997.

\bibitem{badr} {\sc Bardi, Martino ;  Dragoni, Federica,} \emph{Convexity and semiconvexity along vector fields,}
 Calc. Var. Partial Differential Equations  42  (2011),  no. 3-4, 405--427.
 
 \bibitem{bjw} {\sc Barron, E. N., Jensen, R. R., and Wang, C. Y.},  \emph{The Euler equation and absolute minimizers of  L$^{\infty}$  functionals}, Arch. Ration. Mech. Anal. {\bf 157} (2001), no. 4, 255--283.

 \bibitem{bie} {\sc Bieske, T.}, \emph{Properties of infinite harmonic functions of Grushin-type spaces},  Rocky Mountain J. Math.  {\bf 39}  (2009), 729--756.

\bibitem{bica} {\sc Bieske, T.; Capogna, L.}, \emph{The Aronsson-Euler equation for absolutely minimizing Lipschitz extensions with respect to Carnot-Caratheodory metrics}, Trans. Am. Math. Soc. {\bf 357}  (2005), 795-823.

\bibitem{clsw0} {\sc Clarke, F. H.; Ledyaev, Yu. S.; Stern, R. J.; Wolenski, P. R.},
\emph{Qualitative properties of trajectories of control systems: a survey}, J. Dynam. Control Systems 1 (1995), no. 1, 1–48.

 \bibitem{clsw} {\sc Clarke, F. H. ;  Ledyaev, Yu. S. ;  Stern, R. J. ;  Wolenski, P. R.}, \emph{Nonsmooth analysis and control theory}, Graduate Texts in Mathematics, 178. Springer-Verlag, New York,  1998.

\bibitem{cr} {\sc Crandall, M.G.}, \emph{An efficient derivation of the Aronsson equation},  Arch. Ration. Mech. Anal.  {\bf 167}  (2003),  no. 4, 271--279.

\bibitem{drw} {\sc Donchev, T. ;  Rios, V. ;  Wolenski, P.},  \emph{Strong invariance and one-sided Lipschitz multifunctions}, Nonlinear Anal.  60  (2005),  no. 5, 849--862.

\bibitem{jen1} {\sc Jensen, R. R.},  \emph{Uniqueness of Lipschitz extensions: minimizing the sup norm of the gradient}, Arch. Rational Mech. Anal. {\bf 123} (1993), no. 1, 51--74.

\bibitem{ju} {\sc Juutinen, P.}, \emph{Minimization problems for Lipschitz functions via viscosity solutions}, Dissertation, University of Jyvaskula, Jyvaskula, 1998.  Ann. Acad. Sci. Fenn. Math. Diss. {\bf 115}  (1998), 53 pp.

\bibitem{kr3} {\sc M.I. Krastanov and M. Quincampoix},
\emph{On the small-time controllability of discontinuous piece-wise linear systems}, {\em Systems Control Lett.}, 62(2):218--223, 2013.

\bibitem{li} {\sc A. A. Liverovski},
\emph{ A {H}\"older condition for {B}ellman's function}, {\em Differencial'nye Uravnenija}, 13(12):2180--2187, 2301, 1977.
 
 \bibitem{ma} {\sc A. Marigonda},
\emph{Second order conditions for the controllability of nonlinear systems with drift}, {\em Commun. Pure Appl. Anal.}, 5(4):861--885, 2006.

\bibitem{ma3} {\sc A. Marigonda and Thuy~Thi Le},
\emph{ Sufficient conditions for small time local attainability for a class
  of control systems}, In {\em Large-scale scientific computing},
 {\em  Lect. Notes Comput. Sci.} 9374, 117-125. Springer, Cham, 2015.
 
 \bibitem{ma4} {\sc Thuy T.~T. Le and A. Marigonda},
\emph{ Small-time local attainability for a class of control systems with
  state constraints}, {\em ESAIM Control Optim. Calc. Var.}, 23(3):1003--1021, 2017.

 \bibitem{mr}{\sc Motta, Monica; Rampazzo, Franco}, {\emph Asymptotic controllability and Lyapunov-like functions determined by Lie brackets,} SIAM J. Control Optim. 56 (2018), no. 2, 1508–1534.
 
  \bibitem{pe} {\sc N. N. Petrov}, \emph{ Controllability of autonomous systems.},
\newblock {\em Differencial'nye Uravnenija}, 4:606--617, 1968.

\bibitem{so7} {\sc P. Soravia}, \emph{ H{\"o}lder continuity of the minimum-time function for $C^1$-manifold targets}, {\em J. Optim. Theory Appl.}, 75(2):401--421, 1992.

\bibitem{soopt} {\sc Soravia, Pierpaolo,} \emph{Optimality principles and representation formulas for viscosity solutions of Hamilton-Jacobi equations. II. Equations of control problems with state constraints}, Differential Integral Equations  12  (1999),  no. 2, 275--293.
 
 \bibitem{so1}  {\sc Soravia, Pierpaolo} \emph{Existence of absolute minimizers for noncoercive Hamiltonians and viscosity solutions of the Aronsson equation,} Math. Control Relat. Fields  2  (2012),  no. 4, 399--427.

\bibitem{so6} {\sc Soravia, Pierpaolo}, \emph{Absolute minimizers, Aronsson equation and Eikonal equations with Lipschitz continuous vector fields}, In: International conference for the 25th anniversary of viscosity solutions. Tokyo, 4--6 June 2007,  Gakuto Int. Series, Gakkotosho Co., Ltd., (2008), 30, 175--19.

\bibitem{so2}  {\sc Soravia, Pierpaolo} \emph{On Aronsson equation and deterministic optimal control,} Appl. Math. Optim.  59  (2009),  no. 2, 175--201.
				
\bibitem{soae} {\sc Soravia, Pierpaolo}, \emph{Existence of absolute minimizers for noncoercive Hamiltonians and viscosity solutions of the Aronsson equation}, Math. Control Relat. Fields  2  (2012),  no. 4, 399--427.
		
\bibitem{so3}  {\sc Soravia, Pierpaolo} \emph{Some results on second order controllability conditions}, to appear.

\bibitem{wa} {\sc Wang, C.}, \emph{ The Aronsson equation for absolute minimizers of $L\sp \infty$-functionals associated with vector fields satisfying H\"ormander's condition},  Trans. Amer. Math. Soc.  {\bf 359}  (2007), 91--113.

\end{thebibliography}
\end{document}